\documentclass[english]{article}
\usepackage[T1]{fontenc}
\usepackage[latin9]{inputenc}
\usepackage{amsmath}
\usepackage{amssymb}
\usepackage{babel}
\usepackage{amsthm}
\usepackage{graphicx}
\usepackage{tikz}
\usepackage[all]{xy}
\usepackage{bm}
\usepackage{comment}
\usetikzlibrary{calc,patterns,angles,quotes}
\usepackage[toc,page]{appendix}
\usepackage{hyperref}

\newtheorem{prop}{Proposition}
\newtheorem{theorem}{Theorem}

\newtheorem{cor}{Corollary}

\newcommand{\R}{\mathbb{R}}

\newcommand{\C}{\mathbb{C}}

\begin{document}

	\title{Projective Integrable Mechanical Billiards}
	\author{Airi Takeuchi, Lei Zhao}
	\date{}
	\maketitle
	
\begin{abstract}
	In this paper, we use the projective dynamical approach to
	integrable mechanical billiards as in \cite{zhao2021} to establish the
	integrability of natural mechanical billiards with the Lagrange problem,
	which is the superposition of two Kepler problems and a Hooke problem,
	with the Hooke center at the middle of the Kepler centers, as the
	underlying mechanical systems, and with any combinations of confocal
	conic sections with foci at the Kepler centers as the reflection wall,
	in the plane, on the sphere, and in the hyperbolic plane. This covers
	many previously known integrable mechanical billiards, especially the
	integrable Hooke, Kepler and two-center billiards in the plane, as has
	been investigated in \cite{Takeuchi-Zhao}, as subcases. The approach of
	\cite{Takeuchi-Zhao} based on conformal correspondence has been also applied
	to integrable Kepler billiards in the hyperbolic plane to illustrate
	their equivalence with the corresponding integrable Hooke billiards on
	the hemisphere and in the hyperbolic plane as well.
\end{abstract}


\section{Introduction}
A two-dimensional mechanical billiard system $(M, g, U, \mathcal{B})$ is defined on a two-dimensional Riemannian manifold $(M, g)$ with a piecewise smooth curve $\mathcal{B} \subset M$ playing the role of a reflection wall and with $U: M \to \R$ an openly and densely defined smooth force function on $M$ determining a natural mechanical system whose equation of motion is
\begin{equation}\label{eq: Newton} \nabla_{\dot{q}}\dot{q}=\nabla U(q).
\end{equation}
 A particle moves according to the underlying force field and gets reflected elastically at $\mathcal{B}$, \emph{i.e.} at the point of reflection the tangential component of the velocity does not change while the normal component change its signs. The kinetic energy $K(q, \dot{q})=\dfrac{1}{2} g_{q} (\dot{q}, \dot{q})$ is invariant under elastic reflections, and thus the total energy $E(q, \dot{q})=K(q, \dot{q})-U(q)$ as well. 
 
A two-dimensional mechanical billiard is called integrable, if there exists an additional first integral, \emph{i.e.} a first integral of the underlying mechanical system invariant under the reflections, independent of the total energy $E$. Note that to address the problem of integrability, we do not insist on that the billiard mappings is always well-defined.

{Some examples of integrable mechanical billiards are known:}

{For the free motion in the plane $\R^2$, the billiards with a circular or an elliptic reflection wall have well-defined billiard mappings and are integrable. The integrability of the circular case is very easy to check since the angle of reflection is preserved. The integrability of the elliptic case has been shown by Birkhoff \cite{Birkhoff}. This integrability result can be extended also to the billiard system on the two-dimensional sphere and the two-dimensional hyperbolic plane \cite{Veselov-Alexander}\cite{Tabachnikov3}. }
	
{There are also known integrable billiard examples in the presence of non-constant force functions, the most studied systems are those defined with the Hooke or the Kepler problems.  }

{The Hooke problem and the Kepler problem in the plane $\R^2$ refer to the case when $U = f r^2$ and $U = m/r$ respectively, where $r$ is the distance of the particle from the fixed center $O \in \R^2$ and $f, m\in \R$ are parameters which we assume can take both signs: The force may be either attractive or repulsive.}

{For the Hooke problem in $\R^2$, it is rather direct to check that the systems with any line as a reflection wall are integrable. Also the one with a centered conic section as a reflection wall is integrable, in which the case of centered ellipse follows directly from the classical work of Jacobi \cite{Jacobi Vorlesung}. Recent work by Pustovoitov \cite{Pustovoitov2019}\cite{Pustovoitov2021} showed that any confocal combination of centered conic sections are also integrable. }

{For the Kepler problem in $\R^2$, the billiard systems with a line reflection wall which is not passing the center were proposed by Boltzmann \cite{Boltzmann}. The integrability of such systems has been established recently by Gallavotti and Jauslin \cite{Gallavotti-Jauslin} with an analysis on the geometry of ellipses, with alternative proofs by \cite{Felder} and \cite{zhao2021}.  
In \cite{Takeuchi-Zhao}, we establish that any conic sections focused at the center and any confocal combination of them are also integrable, by using the classical Hooke-Kepler correspondence. }

{As compared to the Hooke and the Kepler problems, Euler's two-center problem in $\R^{2}$ are not super-integrable and the billiard problem defined by it seems to be less studied. In \cite{Takeuchi-Zhao}, we showed the integrability of such billiards with combinations of confocal conic sections reflection walls. }

In this paper, we explain that certain integrable mechanical billiards in the two-dimensional plane and constant curvature surfaces are related by projective correspondences. This allows us to yet extend some of our previous results in {\cite{Takeuchi-Zhao}} concerning integrable mechanical billiards in the plane with further extensions to surfaces of constant curvatures. 

{Our main methodology in this paper is based on the projective correspondence between mechanical billiards. This means that in addition to the projective correspondence of the underlying natural mechanical systems, also the laws of reflection are in correspondence to each other, so that a billiard trajectory in one system is projected to a billiard trajectory in the other system. The energies of the systems then give rise to a pair of independent first integrals for both of the two billiard systems.  With this method, the projective correspondence between integrable planar and spherical Kepler billiards with a line or centered circle reflection wall was presented in \cite{zhao2021}. The method can be thought of as an adaptation of the projective method for geodesic flows and free billiards as in \cite{Topalov-Matveev}, \cite{Tabachnikov2}, \cite{Tabachnikov3}, \cite{Tabachnikov 1999}, \cite{Tabachnikov 2002} to the case of mechanical billiards.}

In this paper we consider the billiard systems defined through the Lagrange problem in the plane with $U = m_1/{r_1} +m_2/{r_2} + f r^2 $, on a sphere with $U= m_1 \cot \theta_{Z_1} + m_2 \cot \theta_{Z_2} + f \tan^2 \theta_{Z_{mid}}$ and in a hyperbolic plane with $U= m_1 \coth \theta_{Z_1} + m_2 \coth \theta_{Z_2} + f \tanh^2 \theta_{Z_{mid}}$, which are the problems of adding an elastic force to the two-center problem defined on such a space 
centered at the middle of the two centers. The precise definitions of the notations are given in Section \ref{sec: projective properties of Hooke Kepler Lagrange problems} and Section \ref{sec: hyperbolic Lagrange billiards}. This integrable system has been identified by Lagrange \cite{Lagrange} in the planer case. Note that such a system is singular at the Kepler centers, as well as a singular set created by the elastic force on the sphere, and is regular elsewhere. 

By setting some of the mass factors to zero we get several systems as particular cases including the two-center problem, the Kepler problem, and the Hooke problem in the plane, on a sphere, and in a hyperbolic plane. By confocal conic sections we shall mean those with the two Kepler centers as foci.  

\begin{theorem}\label{thm: Theorem 1} The mechanical billiard problems defined in the plane, on a sphere and in a hyperbolic plane with the Lagrange problem and with any combination of confocal conic sections with foci at the two Kepler centers as reflection wall, are integrable.
\end{theorem}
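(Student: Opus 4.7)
The plan is to prove Theorem 1 by reducing the spherical and hyperbolic cases to the planar case via projective correspondences, and establishing the planar case directly through separation in elliptic coordinates.

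First, for the planar Lagrange billiard, I would use Lagrange's classical first integral $F$ of the planar Lagrange problem, independent of the energy $E$, as the second conserved quantity. Working in elliptic coordinates $(\mu, \nu)$ adapted to the two Kepler centers, both $E$ and $F$ separate into sums of functions of $(\mu, p_\mu)$ and $(\nu, p_\nu)$. The confocal conic sections with foci at the two Kepler centers are precisely the coordinate curves $\{\mu = \text{const}\}$ and $\{\nu = \text{const}\}$. Elastic reflection at such a curve flips the sign of the corresponding conjugate momentum while leaving the other coordinate and momentum unchanged, and therefore preserves both separated integrals. This establishes integrability in the planar case, and in particular recovers the known planar sub-cases (Hooke, Kepler, two-center) referenced earlier in the introduction.

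Second, for the spherical and hyperbolic cases, I would use the central/gnomonic projection to the plane and verify three compatibility properties: (i) the curved Lagrange problem corresponds, after a time reparametrization, to a planar Lagrange problem with suitably transformed mass and coupling parameters; (ii) confocal conics on the sphere (resp.\ hyperbolic plane) with foci at the Kepler centers correspond to confocal conics in the plane with foci at the image Kepler centers; (iii) elastic reflection under the curved metric corresponds to elastic reflection under the planar metric. Granted (i)--(iii), a billiard trajectory in the curved setting projects to a billiard trajectory in a planar Lagrange billiard of the type handled in the first step; the planar integrals pull back to give two functionally independent first integrals of the curved system, equivalently the original curved energy $E_{\text{curved}}$ together with the pulled-back planar energy $E_{\text{planar}}$.

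The main obstacle is compatibility (iii): elastic reflection is a metric notion, whereas a projective correspondence a priori preserves only projective structure. The resolution rests on the optical/focal property of confocal conics, which admits a projectively invariant reformulation when the projection is centered appropriately with respect to the Kepler foci, so that a chord from one focus reflects into a chord through the other focus both in the curved and in the planar geometry. A careful analysis of this focal compatibility, which I expect to be the main technical content of Section~\ref{sec: projective properties of Hooke Kepler Lagrange problems}, is what makes the projective correspondence respect the billiard reflection law and thus transport integrability uniformly across the plane, the sphere, and the hyperbolic plane.
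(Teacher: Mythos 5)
Your overall architecture matches the paper's: establish a projective correspondence of the underlying Lagrange problems under central projection (this is the content of Propositions \ref{prop: Kepler_Proj_Cor} and \ref{prop: Hooke_Proj_Cor} and Theorem \ref{thm: Albouy Lagrange}, plus their hyperbolic analogues), show that confocal conics and the reflection law are compatible with the projection, and then read off two independent integrals. One genuine difference: for the planar case the paper does \emph{not} argue by separation of variables; the second integral of the planar billiard is obtained as the pulled-back spherical (or hyperbolic) energy, with functional independence checked by an explicit Jacobian computation in the gnomonic chart. Your elliptic-coordinate separation argument is a legitimate alternative for the planar sub-case (the Lagrange potential is indeed one of the classical potentials separating in elliptic coordinates, and reflection at a coordinate curve flips only $p_\mu$ or $p_\nu$), but note that the paper deliberately avoids it in order to have a single mechanism that works uniformly in all three geometries.

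The genuine gap is in your step (iii), the compatibility of the elastic reflection laws. The optical/focal property of conics only governs chords passing through a focus: a generic billiard segment does not pass through either Kepler center, so knowing that focus-to-focus chords reflect correctly says nothing about the reflection of an arbitrary incoming velocity. What is actually needed is that the \emph{normal direction} to the conic at each of its points, taken with respect to the curved metric on $\mathcal{S}_{SH}$ (resp.\ $\mathcal{H}_S$), is carried by the push-forward of the central projection to the normal direction with respect to the compatible affine norm $\|\cdot\|_{a}$ on $V$; then every velocity decomposes into tangential and normal parts that transform correctly, and incoming/outgoing pairs map to incoming/outgoing pairs. The paper proves exactly this in Proposition \ref{prop: reflection wall correspondence} by a direct computation (parametrize the spherical ellipse, compute $s$ and $n$, push forward, and verify $\langle \hat{s},\hat{n}\rangle_a=0$), and explicitly remarks that no geometric argument of the kind you propose was found. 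A second, smaller omission: the planar reflection law must be taken with respect to the affine norm $\|\cdot\|_a$ determined by the focal distance, not the ambient Euclidean norm; if you run the planar separation argument you must first rescale coordinates so that this affine metric becomes standard, otherwise the integral preserved by the reflection is not the one your elliptic coordinates separate.
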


{In the plane, the billiard problems defined through the Hooke, the Kepler, and the two-center problems with combinations of confocal conic sections are therefore subcases of Theorem \ref{thm: Theorem 1} and thus their integrability directly follows. These have been previously discussed via a different method, based on conformal transformations, in \cite{Takeuchi-Zhao}. Theorem \ref{thm: Theorem 1} provides an alternative proof of their integrability as well as extensions to the sphere and the hyperbolic plane.} 

Note that somehow in contrast to the conformal transformation used in \cite{Takeuchi-Zhao}, this projective method can be directly applied to the case of higher dimensional problems, and will always provide two first integrals for the Lagrange problems. We shall not discuss these higher dimensional problems in this article and will leave it for future works. Restricting to dimension 2 raises the question of whether some of the integrable systems can be indeed also related by conformal transformations. Toward the end of this article we shall present such links of integrable Kepler billiards in the hyperbolic plane, and the integrable Hooke billiards defined on the sphere and in the hyperbolic plane. {In Proposition \ref{prop: stereographic_confocal}, we also show that a family of confocal focused hyperbolic conic sections are transformed into a family of confocal centered spherical/hyperbolic conic sections by the complex square mapping in conformal charts, which might have an independent geometrical interest.}


{We organize this paper as follows:}

{{In Section \ref{sec: principles of projective dynamics}, we explain the settings and the principle properties of projective dynamics and define projective correspondences of billiard systems.}  {In Section \ref{sec: projective properties of Hooke Kepler Lagrange problems}, we recall projective properties of the Hooke and the Kepler problems and their spherical/hyperbolic analogous systems. We get the projective property of Lagrange problems as has been discovered in \cite{Albouy1}.}{ In Section \ref{sec: integrable Lagrange billiards}, we prove Theorem \ref{thm: Theorem 1} for the planar and the spherical cases, and we discuss some subcases.}
{In Section \ref{sec: hyperbolic Lagrange billiards}, we briefly discuss the hyperbolic case and establish Theorem \ref{thm: Theorem 1} for this case.}
{The conformal correspondences among the Hooke and the Kepler billiards in the hyperbolic space and the Hooke billiard on the hemisphere are discussed in Section \ref{sec: Hooke-Kepler conformal correspondence }. }}

\section{Principles of Projective Dynamics}
\label{sec: principles of projective dynamics}

Let $(M, g, U)$ be a natural mechanical system. The system \emph{possesses a corresponding system} if there exists another natural mechanical system $(M, g', U')$ such that they have the same orbits in $M$ up to time-parametrizations. In this case, any first integral of $(M, g, U)$ is also a first integral of $(M, g', U')$ and vice versa. In particular, the energy $E'$ of the second system $(M, g', U')$ is a first integral of $(M, g, U)$. When $E'$ is functional independent from the energy $E$ of $(M, g, U)$ we have an additional first integral of the system $(M, g, U)$. The same can be said for the system  $(M, g', U')$ in a completely similar way. In practice the underlying smooth manifolds may not be identical. In this case we assume them to be diffeomorphic and identify them by a proper diffeomorphism. 

The subject of projective dynamics is to study correspondences of natural mechanical systems induced by projections. To explain further we write the equations of motion of a particle on a Riemannian manifold $(M, g)$ moving in a force field $F$, as
\begin{equation} \nabla_{\dot{q}} \dot{q}=F(q).
\end{equation}
Note that when $F(q)=\nabla U(q)$ is the gradient of a force function, then we say that the system is \emph{derived from a potential}. By definition the potential is the negative of the force function. The procedure of the projection from $M$ to $M'$ by a diffeomorphism $\phi: M\mapsto M'$ with a time reparametrization factor $\rho: M' \mapsto \R$ is that a force field $F$ on $M$ is projected into the force field $F':=\rho \cdot \phi_{*} F$.

In words:

\emph{The projection of the force field of a system is the force field given by the push-forward of the projection multiplied with a time reparametrization factor. }

 

Now arguing with force fields defined on a manifold, we have the following {principle of superposition}: 

\emph{The projection of superposition of the force fields is the superposition of the projections of the force fields.}

When the force fields are derived from potentials, then so is their superposition. In general, the projections of these force fields are no longer derived from potentials. However this indeed holds for special systems that we are going to address in this article, which provide corresponding systems to the original systems. 



We now comment on billiard correspondences. For this it seems convenient to identify the base manifold $M$ by a diffeomorphism and consider the law of reflection in $M$ with respect to the two metrics. The tangential direction is free from the choice of the metric but the normal direction depends on the metric, and therefore a priori the elastic laws of reflections with respect to different metrics are different. We say that there is a billiard correspondence when the elastic laws of reflection agree in addition to the correspondence of underlying natural mechanical systems. As we can see, this depends on the choice of metric and the shape of the reflection wall. When there is a billiard correspondence, then the billiard trajectories, ignoring time parametrizations, correspond to each other by projection and therefore their billiard mappings are equivalent.  Conserved quantities of one system are thus transformed into conserved quantities of the other system, and therefore the integrability of the billiard system also carries over.  


We refer to \cite{Albouy2}, \cite{Albouy1}, \cite{zhao2021} for further, more detailed presentatons of projective dynamics.

\section{Projective Properties of the Hooke, Kepler and Lagrange Problems}
\label{sec: projective properties of Hooke Kepler Lagrange problems}

In this section, we discuss some projective properties of the Hooke and the Kepler problems and their spherical/hyperbolic analogous systems that we need. Then we shall show that the Lagrange problem also has spherical/hyperbolic analogous systems by the principle of superposition of projective dynamics, which then gives to each of these systems a pair of independent first integrals including their own energies.


\subsection{The Hemisphere-Plane Projection}\label{Subsection: The Hemisphere-Plane Projection}
We set $V=\R^{2} \times \{-1\} \subset \R^{3}$ and $\mathcal{S} \subset \R^{3}$ the unit sphere in $\R^{3}$. The central projection from the origin of $\R^{3}$ projects the open south-hemisphere $\mathcal{S}_{SH}$ onto the plane $V$. We equip $\mathcal{S}$ and $\mathcal{S}_{SH}$ with their induced round metrics from $\R^{3}$, while on $V$ we allow an affine change of metric. A force field $F_{V}$ on $V$ is carried to a force field $\hat{F}_{S}$ on $\mathcal{S}_{SH}$ by the push-forward of the central projection, which is consequently reparametrized into another force field $F_{S}$ with the factor of time change uniquely determined by the projection. 

The Euclidean norm of $\R^{3}$ as well as its restriction to $V$ is denoted by $\|\cdot\|$.

Let $q \in \mathcal{S}_{SH}$ be projected to $\tilde{q} \in V$ by the central projection:
	$$
	q = \| \tilde{q} \|^{-1} \tilde{q}.
	$$
	We write $\dot \,:=\dfrac{d}{dt}$ the time derivative. We start by the force field $F_{V}$ in $V$ and deduce the corresponding force field $F_{S}$ on $\mathcal{S}_{SH}$ which is equivalent to the other way around but the computation simplifies. The equation of motion of the system in $V$ is
	$$\ddot{\tilde{q}}=F_{V}(\tilde{q}).$$
	We compute
	$$
	\dot{q}  = \| \tilde{q} \|^{-2} (\dot{\tilde{q}}  \| \tilde{q} \| - \langle \nabla \| \tilde{q}\| , \dot{\tilde{q}} \rangle \tilde{q}).
	$$
	We now take a new time variable $\tau$ for the system on $\mathcal{S}_{SH}$, and $' :=\dfrac{d}{d\tau}$ such that 
	\begin{equation}
	\label{eq: time_parameter_change}
	{\dfrac{d}{d \tau}= \| \tilde{q} \|^{2}\dfrac{d}{d t}}. 
	\end{equation}
	We thus have
	$$
	q'  =  (\dot{\tilde{q}}  \| \tilde{q} \| - \langle \nabla \| \tilde{q}\| , \dot{\tilde{q}} \rangle \tilde{q}).
	$$
	and
	$$q''= \| \tilde{q} \|^{2} (\ddot{\tilde{q}}  \| \tilde{q} \| - (\langle \nabla \| \tilde{q}\| , \ddot{\tilde{q}} \rangle+\langle \overset{\cdot}{(\nabla \| \tilde{q}\|)} , \dot{\tilde{q}} \rangle) \tilde{q}).$$
	Consequently we have
	\begin{equation}\label{eq: second derivative correspondence}
	q''=\| \tilde{q} \|^{2} (F_{V}(\tilde{q})  \| \tilde{q} \|- \lambda(\tilde{q}, \dot{\tilde{q}}, \ddot{\tilde{q}}) \tilde{q})).
	\end{equation}
	in which we have set $\lambda(\tilde{q}, \dot{\tilde{q}}, \ddot{\tilde{q}})=\langle \nabla \| \tilde{q}\| , \ddot{\tilde{q}} \rangle+\langle \overset{\cdot}{(\nabla \| \tilde{q}\|)} , \dot{\tilde{q}} \rangle$.
	
	We observe that the first term of the right hand side of this equation depends only on $\tilde{q}$ and consequently depends only on $q \in \mathcal{S}_{SH}$ by central projection, {while the second term is radial.} Projecting both sides of this equation to the tangent space $T_{q}  \mathcal{S}_{SH}$ we get the equation of motion on $S_{SH}$, assuming the form
	$$\nabla_{q'} q'=F_{S}(q).$$
	

For our purpose, we would like to have natural mechanical systems which are centrally projected to natural mechanical systems, \emph{i.e.} the question is, when we start from a natural mechanical system on $S_{SH}$ resp. $V$, then whether the projected system on $V$ resp. $S_{SH}$ is also derived from a potential and is thus also a natural mechanical system. As we would expect this does not hold in general. Nevertheless, it actually holds for some important systems.

\subsection{Projective Properties of the Hooke and Kepler Problems}
We consider a central force problem $F_{S}$ on $\mathcal{S}$ with a distinguished center $Z \in \mathcal{S}_{SH}$. By assumption the force field is invariant under the $SO(2)$-action by rotations around $Z$ on $\mathcal{S}$. The projected force field $F_{V}$ on $V$ is in general not derived from a potential. In the same way, a central force problem $F_{V}$ in $V$ with a center $\tilde{Z}$ might not project to a system derived from a potential on $\mathcal{S}_{SH}$.

There are special cases that this does hold. The first is relatively easy to see: when $Z=(0,0, -1)$, the projected force field $F_{V}$ is also invariant under the  $SO(2)$-action by rotations in $V$ as inherited from rotations around the vertical axis in $\R^{3}$, and therefore $F_{V}$ is derived from a potential. The second case is maybe not as easy to see: The point $Z \in \mathcal{S}_{SH}$ can be chosen arbitrary, and $F_{V}$ will be derived from a potential when $F_{S}$ is the force field of the Kepler-Serret Problem on the sphere {\cite{Serret}}, and in this case $F_{V}$ itself is the force field of a Kepler problem in $V$ for a proper choice of an affine metric. Also, among the problems belonging to the first case, the Hooke problems have the property that $F_{V}$ is derived from a potential for \emph{any} affine metrics in $V$. 

\subsubsection{The Kepler Problems}

We first discuss the case of the Kepler problems. The Kepler-Serret problem, or the spherical Kepler problem, is the natural mechanical system $(\mathcal{S}, g_{st}, \hat{m} \cot \theta_{Z})$, in which $g_{st}$ is the round metric on the sphere, $\hat{m} \in \R$ is the mass-factor and $\theta_{Z}$ is the central angle the moving particle made with $Z$. The system naturally restricts to a natural mechanical system $(\mathcal{S}_{SH}, g_{st}, \hat{m} \cot \theta_{Z})$ by restriction. In the case that $Z$ is \emph{vertical}, $Z=(0,0, -1)$, it is not hard to see by a direct computation that the spherical Kepler problem is projected to the planar Kepler problem $(V, \|\cdot\|, \hat{m}/\|\tilde{q}\|)$. Consequently the orbits of the spherical Kepler problem are all conic sections on the sphere by means of orbital correspondence and analytic extension. A special property of the Kepler problem is that this remains true when $Z$ is not vertical, up to a change of metric and of the mass factor \cite{Halphen}. See also \cite{Albouy2}, \cite{zhao2021}.

To normalize the situation we set $Z=  \left(0, \dfrac{a}{\sqrt{1 + a^2}}, -\dfrac{1}{\sqrt{1 + a^2}}\right) \in \mathcal{S}_{SH}$ for $a \in \R$, and $\tilde{Z}=(0, a, -1)$ the projection point of $Z$ in $V$. For $\tilde{q}=(\tilde{x}, \tilde{y}, -1) \in V$ we define
      \begin{equation}
	\label{eq: metric_a}
	\| \tilde{q}\|_a= \sqrt{ \tilde{x}^2 + \frac{\tilde{y}^2}{1+a^2}}
	\end{equation}
	which is an affine change of norm from the induced norm on $V$ with origin at $(0,0,-1)$ of the standard Euclidean norm $\|\cdot\|$ in $\R^{3}$.

\begin{prop} 
	\label{prop: Kepler_Proj_Cor}
	{The spherical Kepler problem {$(\mathcal{S}_{SH}, g_{st}, \hat{m} \cot \theta_{Z})$} projects to the Kepler problem $(V, \|\cdot \|_{a}, m/\|\tilde{q}-\tilde{Z}\|_{a})$ such that {$m = \dfrac{\hat{m}}{\sqrt{1 + a^2}}$.}}
\end{prop}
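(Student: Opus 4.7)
The plan is to verify the correspondence by direct computation: starting from the planar force field attached to the given potential on $(V,\|\cdot\|_a)$, push it forward via the central projection and reparametrize with the factor from \eqref{eq: time_parameter_change}, and then compare with the round-metric gradient of $\hat m \cot\theta_Z$ on $\mathcal{S}_{SH}$.

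The first step, and the key geometric bridge, is to express the spherical angular distance $\theta_Z$ in terms of the projected planar data. Using $q=\tilde q/\|\tilde q\|$ and $Z=\tilde Z/\sqrt{1+a^2}$, I would compute $\cos\theta_Z = q\cdot Z = (a\tilde y+1)/(\|\tilde q\|\sqrt{1+a^2})$, and then a straightforward simplification of $1-\cos^2\theta_Z$ should yield the identity
$$\sin\theta_Z \;=\; \frac{\|\tilde q-\tilde Z\|_a}{\|\tilde q\|}.$$
This is precisely what explains, and motivates, the choice of affine norm $\|\cdot\|_a$ in \eqref{eq: metric_a}; it is the heart of the projective correspondence.

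The second step is to compute both sides of the claimed correspondence. From $(V,\|\cdot\|_a,m/\|\tilde q-\tilde Z\|_a)$, since $g_a$ is flat its Christoffel symbols vanish in the coordinates $(\tilde x,\tilde y)$, and the planar equation of motion is $\ddot{\tilde q}=-m(\tilde q-\tilde Z)/\|\tilde q-\tilde Z\|_a^3$. Plugging into \eqref{eq: second derivative correspondence}, the radial term drops when we project onto $T_q\mathcal{S}_{SH}$, so $F_S(q)$ is just the tangential part of $\|\tilde q\|^3 F_V$ at $q$. Rewriting $\tilde q=\|\tilde q\|q$ and $\tilde Z=\sqrt{1+a^2}\,Z$, and using $\|\tilde q-\tilde Z\|_a=\|\tilde q\|\sin\theta_Z$ from Step 1, I expect the tangential projection to simplify to
$$F_S(q) \;=\; \frac{m\sqrt{1+a^2}}{\sin^3\theta_Z}\bigl(Z-\cos\theta_Z\,q\bigr).$$
A standard spherical gradient computation then gives $\nabla^{g_{st}}(\hat m\cot\theta_Z)=(\hat m/\sin^3\theta_Z)(Z-\cos\theta_Z\,q)$; matching the scalar prefactors yields $\hat m = m\sqrt{1+a^2}$, which is the claimed relation.

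The main obstacle is the bookkeeping of three different metric scales: the affine norm $\|\cdot\|_a$ on $V$, the round metric on $\mathcal{S}_{SH}$, and the ambient Euclidean norm in $\R^3$ implicitly used in \eqref{eq: second derivative correspondence}. The factor $\sqrt{1+a^2}$ between $m$ and $\hat m$ can be traced back directly to $\|\tilde Z\|=\sqrt{1+a^2}$ versus $\|Z\|=1$, with the anisotropic norm $\|\cdot\|_a$ tuned precisely so that this discrepancy appears as a clean multiplicative constant rather than as a deformation of the Kepler-type law itself.
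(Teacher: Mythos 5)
Your proposal is correct, and it reaches the stated mass relation by a genuinely different computation than the paper's. Both arguments share the same skeleton --- start from $F_{V}(\tilde q)=-m\|\tilde q-\tilde Z\|_a^{-3}(\tilde q-\tilde Z)$, feed it into \eqref{eq: second derivative correspondence}, discard the radial term under projection to $T_q\mathcal{S}_{SH}$ --- but they diverge at the decisive step. The paper restricts to the $S^1$-family of planes $W$ through $O$, $Z$, $\tilde Z$, introduces the angle $\phi$ of the section line $\ell$ and the foot of perpendicular $G$, expresses $\|\tilde q-\tilde Z\|_a$ along $\ell$ in terms of $\phi$, and closes the computation of the \emph{norm} of $F_S$ with the law of sines in the triangle $\tilde q O\tilde Z$, then identifies the force function by restricting to a great circle. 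You instead prove the global identity $\sin\theta_Z=\|\tilde q-\tilde Z\|_a/\|\tilde q\|$ by direct expansion of $1-(q\cdot Z)^2$ (this checks out: $(1+a^2)\|\tilde q\|^2-(a\tilde y+1)^2=(1+a^2)\tilde x^2+(\tilde y-a)^2=(1+a^2)\|\tilde q-\tilde Z\|_a^2$), compute the tangential projection in closed vector form, $P_q\bigl(\|\tilde q\|^3F_V\bigr)=\frac{m\sqrt{1+a^2}}{\sin^3\theta_Z}\bigl(Z-\cos\theta_Z\,q\bigr)$, and match it against the explicit spherical gradient $\nabla^{g_{st}}(\hat m\cot\theta_Z)=\frac{\hat m}{\sin^3\theta_Z}(Z-\cos\theta_Z\,q)$. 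Your route buys a coordinate-free identification of the full force field (direction and magnitude at once) and makes transparent where $\sqrt{1+a^2}=\|\tilde Z\|$ enters, at the cost of the elementary-geometric picture; the paper's sectional argument is more visual and reuses the same planar-restriction template in the Hooke case, but needs the extra step of recognizing that a central field of strength $|\hat m|\sin^{-2}\theta_Z$ is the gradient of $\hat m\cot\theta_Z$. Both are complete proofs of the proposition.
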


\begin{proof}
	{With the procedure explained in Subsection \ref{Subsection: The Hemisphere-Plane Projection}, we arrive from a planar force field $F_{V}$ to a spherical force field $F_{S}$.}
	
	{We now consider the Kepler problem on $V$: 
	\begin{equation}
	\label{eq: Kepler_force_V}
	(V, \|\cdot \|_{a}, m \| \tilde{q} - \tilde{Z} \|_a^{-1})
	\end{equation}
	which determines the force field 
	\begin{equation}\label{eq: plane Kepler force field}
	F_{V} (\tilde{q}):=- m   \| \tilde{q} - \tilde{Z} \|_a^{-3} (\tilde{q}- \tilde{Z})
	\end{equation}
	 on $V$. }
	 
	 {We now plug \eqref{eq: plane Kepler force field} into the right hand side of \eqref{eq: second derivative correspondence} and compute its projection to the tangent space of $T_{q } \mathcal{S}_{SH}$. We may effectively forget the second term in the right hand side of \eqref{eq: second derivative correspondence} since it projects to zero in $T_{q } \mathcal{S}_{SH}$. As for the first term in the right hand side of \eqref{eq: second derivative correspondence}, we see that it is again central on $\mathcal{S}_{SH}$ by the central projection. Therefore it is enough to compute its norm to determine the corresponding $F_{S}$ on $\mathcal{S}_{SH}$. }
	 
	 
	 
	

	{For this purpose, we restrict the system to (oriented) planes passing through the centers $Z, \tilde{Z}$ as well as the center $O=(0,0,0)$ of $\mathcal{S}_{SH}$. These planes in $\R^{3}$ form an $S^{1}$-family. We compute the restricted force field on any of these planes.}
	
	{We fix such a plane {$W$, which necessarily intersects $V$ by construction. Let $\ell$ be the  intersection line. Let $G$ be the point on $\ell$ such that $OG$ is perpendicular to $\ell$. Let  $\phi$ be the angle between $\ell$ and the intersection line $\{(0, y, -1)\}$ of the $yz-$plane and $V$. The restriction to $\ell$ of the function $\|\tilde{q} - \tilde{Z}\|_a$ can be written as }
		\[
		\|\tilde{q} - \tilde{Z}\|_a = \| \tilde{q} - \tilde{Z}\| \sqrt{\sin^2 \phi + \dfrac{\cos^2 \phi}{1+ a^2}} =\| \tilde{q} - \tilde{Z}\| \sqrt{\frac{1 + a^2 \sin^2 \phi }{1 + a^2}}.  
		\]
		Thus the force filed $F_{V}(\tilde{q})$ restricted to $\ell$ {is given by}
		\[
		F_{V}(\tilde{q}) = -m { \left( \frac{1+a^2}{1 + a^2 \sin^2 \phi }  \right)^{3/2} \| \tilde{q} - \tilde{Z}\|^{-3}} (\tilde{q} - \tilde{Z}).
		\]
		{The line $\ell$ passes through the two points $\tilde{q} $ and $\tilde{Z} $ and the equation of $\ell$ is given by 
		\[
		y = \cot \phi \, x + a.
		\]
		Let $Z_0= (0,0,-1)$, then $G$ can be obtained as the point on $\ell $ such that $Z_0 G$ is perpendicular to $\ell$ and computed as
		\[
		G:= (-a \sin \phi \cos \phi, -a \sin^2 \phi,-1).
		\]
		}
		{By  \eqref{eq: second derivative correspondence}, the corresponding force field on  $\mathcal{S}_{SH}$ is determined by the projection of $\| \tilde{q}\|^3 F_{V}(\tilde{q}) $ to $T_{q} \mathcal{S}_{SH}$, which is computed as} 
		\[
		\| \tilde{q}\|^3 F_{V}(\tilde{q}) \cdot \cos \theta_G = \| \tilde{q}\|^3 F_{V}(\tilde{q}) \cdot \frac{\| G\|}{\|\tilde{q}\|}=\| \tilde{q}\|^3 F_{V}(\tilde{q}) \cdot \frac{\sqrt{1 + a^2 \sin^2 \phi}}{\|\tilde{q}\|}=\| \tilde{q}\|^2   \sqrt{1 + a^2 \sin^2 \phi} \cdot F_{V}(\tilde{q}) ,
		\] 
		where $\theta_G = \angle \tilde{q} O G$.
		
		{We now compute its norm as}
		\begin{align*}
		 &|m| \| \tilde{q}\|^2  \| \tilde{q} - \tilde{Z} \|^{-2} (1+a^2)^{3/2} (1 + a^2 \sin^2 \phi )^{-1} \\
		  =&  \sqrt{1+a^2} |m| \| \tilde{q}\|^2 \|\tilde{Z}\|^2  \| \tilde{q} - \tilde{Z} \|^{-2} (1 + a^2 \sin^2 \phi  )^{-1} \\
		  = & |\hat{m}|\| \tilde{q}\|^2 \|\tilde{Z}\|^2  \| \tilde{q} - \tilde{Z} \|^{-2} \| G\|^{-2} \\
		   = &|\hat{m}| \sin^{-2} \theta_{Z}.
		\end{align*}
		{if we set $m = \dfrac{\hat{m}}{\sqrt{1+a^2}}$.} For the last equality we applied the law of sines for the triangle $\tilde{q} O \tilde{Z}$. The computation is illustrated in Figure \ref{fig: section_views}.
	} 
	
		\begin{figure}
		\begin{tikzpicture}
		\begin{scope}[scale= 1.0]
		\draw[-,>=stealth] (-3.5,-2)--(3.5,-2)node[right]{$\ell$}; 
		\coordinate (O) at (0, 0);
		\fill[black] (O) circle (0.05);
		\coordinate (G) at (0, -2);
		\fill[black] (G) circle (0.05);
		\coordinate (Zt) at (-2, -2);
		\fill[black] (Zt) circle (0.05);
		\coordinate (qt) at (-3, -2);
		\fill[black] (qt) circle (0.05);
		\coordinate (W) at (-3,1) ;
		\fill[black] (W) circle (0.0) node{$W$};
		
		\draw (O) circle [radius=1.5];
		
		\coordinate (Z) at (-1.06, -1.06);
		\fill[black] (Z) circle (0.05) node[below]{$Z$};
		\coordinate (q) at (-1.24, -0.83);
		\fill[black] (q) circle (0.05) node[left]{$q$};
		
		\coordinate (R) at (2,-2);
		\coordinate (P) at ($(G)!.3cm!(R)!.3cm!90:(R)$);
		\draw[thick, red] ($(G)!(P)!(O)$)--(P);
		\draw[thick, red] ($(G)!(P)!(R)$)--(P);
		
		\draw (O)node[right]{O} -- (G) node[below]{G};
		\draw (O) -- (Zt) node[below]{$\tilde{Z}$};
		\draw (O) -- (qt) node[below]{$\tilde{q}$};
		\end{scope}
		\begin{scope}[xshift = 5.5cm, scale=0.6]

		\end{scope}
		
		\begin{scope}[xshift = 6.0cm, scale=1.0]
		\draw[-,>=stealth] (0,-3.5)--(0,2)node[right]{$\ell$}; 
		\coordinate (Z0) at (1, 0);
		\fill[black] (Z0) circle (0.05);
		\coordinate (G) at (0, 0);
		\fill[black] (G) circle (0.05);
		\coordinate (Zt) at (0, -2);
		\fill[black] (Zt) circle (0.05);
		\coordinate (qt) at (0, -3);
		\fill[black] (qt) circle (0.05);
		\coordinate (V) at (-1,1.5) ;
		\fill[black] (V) circle (0.0) node{$V$};
		
		\draw (Z0) node[right]{$Z_0$}  -- (G) node[left]{$G$};
		\draw (Z0) -- (Zt) node[left]{$\tilde{Z}$};
		\draw (Z0) -- (qt) node[left]{$\tilde{q}$};
		
		\draw pic[draw=black, "$\phi$", angle eccentricity=1.4, angle radius=0.5cm] {angle=Z0--Zt--G};
		
		\coordinate (R) at (0,2);
		\coordinate (P) at ($(G)!.3cm!(R)!.3cm!-90:(R)$);
		\draw[thick, red] ($(G)!(P)!(Z0)$)--(P);
		\draw[thick, red] ($(G)!(P)!(R)$)--(P);
		
		\end{scope}
		\end{tikzpicture}
		\caption{{Sectional Views on the Planes $W$ and $V$.}}
		\label{fig: section_views}
	\end{figure}
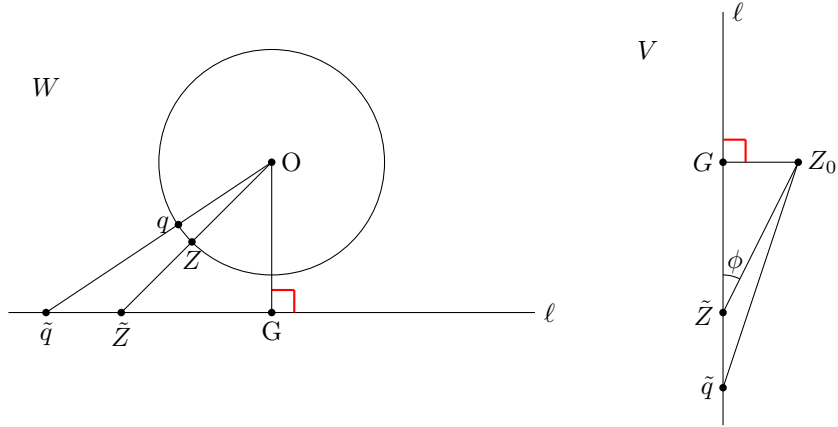

	{So after this computation we conclude that $F_{S}$ is the central force field on $\mathcal{S}_{SH}$ with strength $|\hat{m}| \sin^{-2} \theta_{Z}$ in which $\theta_{Z}$ is the central angle of $q$ to $Z$, pointing toward $Z$ or its antipodal point according to the sign of $\hat{m}$. This force field can be extended to the whole $\mathcal{S}$ which is singular only at $Z$ and its antipodal point, and is invariant under rotations along the line $OZ$. Restricting to a great circle passing through the point $Z$ we conclude that this system is derived from the force function $\hat{m} \cot \theta_{Z}$.}

\end{proof}

Note that among all homogeneous central force problems, this property of being projective invariant is unique for the Kelpler problem \cite{Albouy2}.

\subsubsection{The Hooke Problems}
The spherical Hooke problem is the system $(\mathcal{S}, g_{st}, f \tan^{2} \theta_{Z})$ with $f \in \R$. This is seen to be the analytic extension of the projection  $(\mathcal{S}_{SH}, g_{st}, f \tan^{2} \theta_{Z})$ of the Hooke problem in the plane $(V, \|\cdot\|, f \|\tilde{q}\|^{2})$. A special projective property of the Hooke problem is summarized in the following proposition. {In contrast to the Kepler case, here we assume} that the center for the Hooke problem is vertical \emph{i.e.} $Z = (0,0,-1)$.

\begin{prop} 
	\label{prop: Hooke_Proj_Cor}
	The spherical Hooke problem $(\mathcal{S}_{SH}, g_{st}, f \tan^{2} \theta_{Z})$ with $Z=(0,0,-1)$ projects to any of the Hooke problems in $V$ of the form $(V, \|\cdot|\|_{a}, f \|\tilde{q}\|_{a}^{2})$ for any $a \in \R$. 
\end{prop}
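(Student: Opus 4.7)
The strategy is to parallel the approach in the proof of Proposition \ref{prop: Kepler_Proj_Cor}, but to exploit a cancellation peculiar to the Hooke potential that makes the planar equation of motion independent of the parameter $a$. This collapses the identification for all $a$ to the single already-noted case $a=0$.

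The first step is to compute the force field $F_V$ of the planar system $(V, \|\cdot\|_a, f\|\tilde{q}\|_a^2)$ and observe that it does not depend on $a$. Because $\|\cdot\|_a^2 = \tilde{x}^2 + \tilde{y}^2/(1+a^2)$ is a constant diagonal metric in the coordinates $(\tilde{x}, \tilde{y})$, the associated Levi--Civita connection is trivial and $\nabla_{\dot{\tilde{q}}}\dot{\tilde{q}} = \ddot{\tilde{q}}$. The gradient of $U = f\|\tilde{q}\|_a^2$ is obtained by raising the index in $dU = (2f\tilde{x},\,2f\tilde{y}/(1+a^2))$ with the inverse metric $\operatorname{diag}(1,1+a^2)$; the factor $1/(1+a^2)$ from $\partial_{\tilde{y}} U$ cancels exactly the factor $(1+a^2)$ from the inverse metric, producing $F_V = (2f\tilde{x},2f\tilde{y}) = 2f(\tilde{q}-\tilde{Z})$ as a vector in $\R^3$. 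Hence the planar equation of motion reads $\ddot{\tilde{q}} = 2f(\tilde{q}-\tilde{Z})$ for every $a \in \R$.

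The second step combines this cancellation with the projective procedure. Since $F_V$ and the time parametrization of the planar problem are both $a$-independent, the orbits on $V$ are the same for every $a$, and applying the central projection with the time reparametrization \eqref{eq: time_parameter_change} yields the same parametrized curves on $\mathcal{S}_{SH}$. It therefore suffices to identify the projected system for a single value, and we take $a=0$: then $\|\cdot\|_0$ is the Euclidean norm on $V$ and the planar system is the standard Hooke problem, whose projection is the spherical Hooke problem $(\mathcal{S}_{SH}, g_{st}, f\tan^2\theta_Z)$ by the identification recalled immediately before the proposition. The same spherical system is therefore the projection of $(V, \|\cdot\|_a, f\|\tilde{q}\|_a^2)$ for every $a$. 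The only technical subtlety is the algebraic cancellation in the first step: the anisotropy factor $1+a^2$ enters the gradient both through the potential $\|\tilde{q}\|_a^2$ and through the index-raising, and the two contributions cancel exactly; by contrast, the Kepler potential in Proposition \ref{prop: Kepler_Proj_Cor} admits no such cancellation, which is why there the mass factor must be rescaled as $m = \hat{m}/\sqrt{1+a^2}$.
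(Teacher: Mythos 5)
Your proof is correct, and its central observation --- that the anisotropy factor $1+a^2$ entering the potential through $\|\tilde{q}\|_a^2$ cancels against the factor coming from raising the index with the inverse metric, so that $F_V=2f(\tilde{q}-\tilde{Z})$ is the same vector field for every $a$ --- is precisely the key point of the paper's own proof, which records the same fact almost verbatim. Where you diverge is in identifying the projected system on $\mathcal{S}_{SH}$. The paper carries out the spherical computation: it projects $\|\tilde{q}\|^{3}F_{V}(\tilde{q})$ orthogonally to $T_{q}\mathcal{S}_{SH}$, obtains a central field of magnitude $2|f|\sin\theta_{Z}/\cos^{3}\theta_{Z}$, and recognizes this, upon restriction to a great circle through $Z$, as coming from the force function $f\tan^{2}\theta_{Z}$. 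You instead reduce all $a$ to the single case $a=0$ and invoke the identification, stated just before the proposition, of the projection of the Euclidean Hooke problem with the spherical Hooke problem. Your reduction is logically sound: the projection procedure of Subsection \ref{Subsection: The Hemisphere-Plane Projection} depends only on $F_V$ as a vector field and on the time change \eqref{eq: time_parameter_change}, neither of which sees $a$, so identical planar data must project to identical spherical data. Be aware, though, that the paper only asserts the $a=0$ case informally (``this is seen to be the analytic extension of the projection\dots'') before the proposition, and the computation inside the paper's proof is in effect the verification of that base case. Your argument therefore buys brevity by outsourcing the one genuinely computational ingredient --- that the central projection of the planar Hooke field is derived from $f\tan^{2}\theta_{Z}$ --- to that external assertion; granting it, your proof is complete.
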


\begin{proof}
	 {The Hooke problem in $V$ with respect to a norm $\|\cdot\|_{a}$ is the system
	 \[
	 (V, \| \cdot \|_a, f \| \tilde{q}\|^2_a).
	 \]
	 The corresponding force field is given by 
	 \[
	 F_V (\tilde{q}):= 2 f (\tilde{q} - Z).
	 \]
	 A simple property which nevertheless worths to be mentioned, is that this force field is independent of $a$, \emph{i.e.} this force field corresponds to \emph{any} Hooke system of the above form.}
	 
	{The corresponding force field $F_S$ on $\mathcal{S}_{SH}$ is again determined by the central projection, and we obtain a central force field on  $\mathcal{S}_{SH}$ centered at $Z$, with the sign of $f$ determines whether $Z$ is attractive or repulsive just as in the planar case. }
	{Again, we just have to determine the norm of $F_{S}$. For this purpose, we restrict the system to any planes passing trough the center $Z$ and the center $O$ of $\mathcal{S}_{SH}$. With the same argument as in the proof of Proposition \ref{prop: Kepler_Proj_Cor}, we just have to compute  the projection of $\|\tilde{q}\|^3 F_V(\tilde{q})$ to the tangent space $T_q \mathcal{S}_{SH}$,  given by 
	 \[
	 2f \| \tilde{q}\|^3 \cos \theta_Z  (\tilde{q} - Z).
	 \]
	Its norm is computed as
	 \[
	 2|f| \| \tilde{q} \|^3 \cos  \theta_Z \| \tilde{q} - Z\| = 2 |f|  \frac{\cos  \theta_Z \tan \theta_Z}{\cos^3  \theta_{Z}} = 2 |f| \frac{\sin \theta_Z}{\cos^3 \theta_{Z}}.
	 \]
	By again restricting to a great circle passing trough the center $Z$, we get that this system has the force function $f \tan^2 \theta_Z$.
	}
\end{proof}

\subsection{The Lagrange Problems in the plane and on the Sphere}

The Lagrange problem in the plane $\R^{2}$ is the system 
\begin{equation}\label{sys: planar Lagrange}
(\R^{2}, \|\cdot\|, m_{1} /\|q-Z_{1}\| + m_{2} /\|q-Z_{2}\|+f \|q-(Z_{1}+Z_{2})/2\|^{2}),
\end{equation}
with $m_1, m_2, f \in \R$, which is the superposition of two Kepler problems and a Hooke problem, with the Kepler centers placed symmetrically with respect to the Hooke center. 

Similarly, we define the Lagrange problem on the sphere as the system
\begin{equation}\label{sys: spherical Lagrange}
(\mathcal{S}, g_{st}, \hat{m}_{1} \cot \theta_{Z_{1}} + \hat{m}_{2} \cot \theta_{Z_{2}}+f \tan^{2} \theta_{Z_{mid}}),
\end{equation}
for which we assume that $Z_{2} \notin \{Z_{1}, -Z_{1}\}$. $\theta_{P}$ central angle of the moving particle to a point $P \in \mathcal{S}$, $Z_{mid}$ middle point of $Z_{1}$ and $Z_{2}$.

 Based on the previous Propositions {\ref{prop: Kepler_Proj_Cor} and \ref{prop: Hooke_Proj_Cor}}, we see that the following remarkable theorem holds
 
 \begin{theorem}\label{thm: Albouy Lagrange} {(Albouy \cite{Albouy1}) In the case $Z_{mid}$ is vertical, then the {spherical Lagrange problem on $\mathcal{S}_{SH}$} with masses $\hat{m}_{1}, \hat{m}_{2}, f$ is projected to a planar Lagrange problem in $V$, with the projections of the Kepler and Hooke centers as its own Kepler and Hooke centers, with the affine norm $\| \cdot \|_a$ and parameters  $m_{1}, m_{2}, f$ as determined by Proposition
  \ref{prop: Kepler_Proj_Cor}. }
 \end{theorem}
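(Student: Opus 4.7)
The plan is to combine the superposition principle of projective dynamics, stated earlier in the paper, with Propositions \ref{prop: Kepler_Proj_Cor} and \ref{prop: Hooke_Proj_Cor}. Since $Z_{mid}$ is vertical and is the midpoint of $Z_1, Z_2$, after a rotation about the vertical axis we may normalize
\[
Z_{mid} = (0,0,-1), \quad Z_1 = \left(0, \tfrac{a}{\sqrt{1+a^2}}, -\tfrac{1}{\sqrt{1+a^2}}\right), \quad Z_2 = \left(0, -\tfrac{a}{\sqrt{1+a^2}}, -\tfrac{1}{\sqrt{1+a^2}}\right)
\]
for some $a\in\R$, with central projections $\tilde Z_1=(0,a,-1)$ and $\tilde Z_2=(0,-a,-1)$ in $V$, whose midpoint is $(0,0,-1)$ as expected.

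Next, I would write the spherical Lagrange force field on $\mathcal{S}_{SH}$ as the superposition of two spherical Kepler fields (centered at $Z_1,Z_2$ with strengths $\hat m_1,\hat m_2$) and one spherical Hooke field (centered at $Z_{mid}$ with strength $f$). Proposition \ref{prop: Kepler_Proj_Cor} identifies each spherical Kepler piece as the central projection of a planar Kepler field at $\tilde Z_i$ with mass $m_i=\hat m_i/\sqrt{1+a^2}$ relative to the affine norm $\|\cdot\|_a$. The key observation is that the formula \eqref{eq: metric_a} depends only on $a^2$, so the parameter $-a$ arising for $Z_2$ yields exactly the same norm $\|\cdot\|_a$ as for $Z_1$. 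Proposition \ref{prop: Hooke_Proj_Cor} then identifies the spherical Hooke field with the central projection of the planar Hooke field $(V,\|\cdot\|_a,\,f\|\tilde q\|_a^2)$ for any $a$, so I may take the same $a$ as above.

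Finally, since the central projection $\phi:\mathcal{S}_{SH}\to V$ and its associated time-reparametrization factor $d\tau/dt=\|\tilde q\|^2$ are intrinsic to the projection and therefore common to all three components, the superposition principle applies directly: the sum of the three planar force fields projects, component by component, to the sum of the three spherical force fields. The planar sum is precisely the force field of the Lagrange problem \eqref{sys: planar Lagrange} with centers $\tilde Z_1,\tilde Z_2,(0,0,-1)$, parameters $m_1,m_2,f$, and affine norm $\|\cdot\|_a$, which gives the theorem. The one point that requires verification — and the only place any real work enters — is that a \emph{single} affine norm $\|\cdot\|_a$ serves all three pieces simultaneously; this is secured at no extra cost by the symmetry of the Kepler centers about the vertical axis (making the $\pm a$ parameters yield the same norm) together with the $a$-independence granted by Proposition \ref{prop: Hooke_Proj_Cor}.
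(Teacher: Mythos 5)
Your proposal is correct and follows essentially the same route as the paper: normalize the centers symmetrically about the vertical axis, apply Propositions \ref{prop: Kepler_Proj_Cor} and \ref{prop: Hooke_Proj_Cor} to the three pieces, and invoke the superposition principle with the common affine norm $\|\cdot\|_a$ and the common time-reparametrization factor. Your explicit justification that a single norm serves all three components (the $\pm a$ symmetry for the two Kepler centers plus the $a$-independence of the Hooke projection) is exactly the point the paper's proof asserts when it says the norm "was chosen as common for all the three central force problems," only spelled out in more detail.
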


\begin{proof}
	{We assume that the $Z_{mid}$ is vertical \emph{i.e.} $Z_{mid}= (0,0,-1)$.
	Additionally, for the normalization purpose, we set $Z_1 = (0,a,-1)$ and $Z_2 = (0,-a,-1)$. We then define the norm in $V$ as 
	\[
	\|q\|_a = \sqrt{x^2 + \frac{y^2}{1 + a^2}}
	\]
	for $q=(x,y,-1)$.}
{The affine norm $\| \cdot \|_a$ in $V$ was chosen}
as common for all the three central force problems, two Kepler problems and a Hooke problem. By the principle of superposition, we may thus superpose them and the conclusion follows from the previous Propositions {\ref{prop: Kepler_Proj_Cor} and \ref{prop: Hooke_Proj_Cor}}. 
\end{proof}

{As a consequence to Theorem \ref{thm: Albouy Lagrange}, we have} 

\begin{prop} \label{prop: Albouy Lagrange} The energy of the spherical Lagrange problem induces an additional first integral for the planar Lagrange problem independent of its energy. Vice versa, the energy of the planar Lagrange problem induces an additional first integral for the spherical Lagrange problem independent of its energy.
\end{prop}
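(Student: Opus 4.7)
The plan is to deduce Proposition~\ref{prop: Albouy Lagrange} as a direct corollary of Theorem~\ref{thm: Albouy Lagrange} combined with the general correspondence principle recalled in Section~\ref{sec: principles of projective dynamics}. That principle asserts that whenever two natural mechanical systems share the same orbits up to time reparametrization, the first integrals of one are first integrals of the other; in particular each energy induces a candidate first integral of the other system, which is a genuine additional first integral as soon as functional independence holds.

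Concretely, Theorem~\ref{thm: Albouy Lagrange} provides exactly such a correspondence between the spherical Lagrange problem on $\mathcal{S}_{SH}$ (with $Z_{mid}$ placed at the vertical position) and a planar Lagrange problem on $V$ equipped with the affine norm $\|\cdot\|_a$, under the central projection and the time change $d\tau = \|\tilde q\|^{2}\,dt$. The first step is then to pull back the spherical energy $E_S = \tfrac12 g_{st}(q',q') - U_S(q)$ via this correspondence, using the velocity relation $q' = \dot{\tilde q}\,\|\tilde q\| - \langle \nabla\|\tilde q\|,\dot{\tilde q}\rangle\,\tilde q$ derived in Subsection~\ref{Subsection: The Hemisphere-Plane Projection}, to obtain a smooth function $\tilde E_S$ on $TV$ that is automatically a first integral of the planar Lagrange problem. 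The symmetric construction starting from $E_V = \tfrac12\|\dot{\tilde q}\|_a^{2} - U_V(\tilde q)$ produces a first integral of the spherical Lagrange problem on $T\mathcal{S}_{SH}$.

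The main, and essentially only, remaining obstacle is to verify that $\tilde E_S$ and $E_V$ are functionally independent on $TV$, and likewise on the spherical side. My plan is to argue fibrewise on $T_{\tilde q}V$: both functions are quadratic in the velocity plus a position-dependent term, but their kinetic quadratic forms differ, since the pulled back $\tfrac12\|q'\|^2$ carries the non-constant factor $\|\tilde q\|^{2}$ together with the subtraction of the radial component determined by $\nabla\|\tilde q\|$, whereas $\tfrac12\|\dot{\tilde q}\|_a^{2}$ is independent of $\tilde q$. These two quadratic forms on $T_{\tilde q}V$ are therefore non-proportional at generic $\tilde q$, so it suffices to exhibit a single point $(\tilde q_0,\dot{\tilde q}_0)\in TV$ at which the differentials $dE_V$ and $d\tilde E_S$ are linearly independent, and the same argument transported back through the projection handles the spherical side.
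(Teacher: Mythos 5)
Your proposal follows essentially the same route as the paper: conservation is deduced from the orbit correspondence of Theorem~\ref{thm: Albouy Lagrange}, and functional independence is then verified by looking at the velocity-dependence of the two energies in a common chart. The paper does the latter by writing $E_{pl}$ and $E_{sp}$ explicitly in the gnomonic chart and checking that the $2\times 2$ block of velocity-derivatives of the Jacobi matrix has rank $2$ generically; your observation that the two kinetic quadratic forms on $T_{\tilde q}V$ are non-proportional for generic $\tilde q$ (in the chart the spherical one is $\tfrac12\bigl(\dot{\tilde x}^2+\dot{\tilde y}^2+(\dot{\tilde x}\tilde y-\tilde x\dot{\tilde y})^2\bigr)$, which genuinely depends on $\tilde q$, while $\tfrac12\bigl(\dot{\tilde x}^2+\dot{\tilde y}^2/(1+a^2)\bigr)$ does not) is a more conceptual packaging of exactly that computation, and it is sound; by analyticity, independence at one point already gives independence on an open dense set.

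There is one step the paper carries out that your sketch omits. The central projection only identifies $V$ with the open hemisphere $\mathcal{S}_{SH}$, whereas the spherical Lagrange problem \eqref{sys: spherical Lagrange} is posed on all of $\mathcal{S}$. Your construction therefore only produces a first integral on $T\mathcal{S}_{SH}$. The paper closes this by showing that the pulled-back planar energy --- both the transformed kinetic form \eqref{eq: pl_kinetic_S_SH} and the transformed potential \eqref{eq: planer_potential_pr1} --- extends analytically from $\mathcal{S}_{SH}$ to the whole sphere away from the singular set (the Kepler centers, their antipodes, and the equator $\{z=0\}$). You should add this extension argument; without it the "vice versa" half of the proposition is only established for the restricted problem on the hemisphere.
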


\begin{proof} 
	{The conservation of the energy of the planar problem in the spherical problem as well as the conservation of the energy of the spherical problem in the planar problem both follow from the fact that these systems are in correspondence, so their orbits in the configuration spaces are equivalent up to a time reparametrization.} 
	
	{To show their independence, we give their explicit expressions in a common chart as in \cite{zhao2021}.}
	{To normalize our situation, we here again assume that $Z_{mid}= (0,0,-1), \tilde{Z}_1= (0,a,-1)$, and $\tilde{Z}_2 = (0,-a,-1)$.}
	{Then the planer energy for the Lagrange problem in $V$ is described as
		\begin{align*}
		E_{pl} &=\frac{\| \dot{\tilde{q}} \|_a^2 }{2} - f\|\tilde{q}  \|^2_a  - \frac{m_1}{\| \tilde{q} - \tilde{Z}_1 \|_a } - \frac{m_2}{\| \tilde{q} - \tilde{Z}_2 \|_a }  \\
		&= \frac{1}{2} \left(\dot{\tilde{x}}^2 + \frac{\dot{\tilde{y}}^2}{1 + a^2}\right) - f \left(\tilde{x}^2 + \frac{\tilde{y}^2}{1+a^2}\right) - \frac{m_1}{\sqrt{\tilde{x}^2 + \frac{(\tilde{y} -a)^2}{1 + a^2}}}- \frac{m_2}{\sqrt{\tilde{x}^2 + \frac{(\tilde{y} +a)^2}{1 + a^2}}},
		\end{align*}}
	where $\tilde{q}=(\tilde{x}, \tilde{y}) \in V$ and $(\dot{\tilde{x}},\dot{\tilde{y}})\in T_{\tilde{q}}V$

		{We now write the energy of the spherical problem in the gnomonic chart $V$. In $\mathbb{R}^3$ the spherical kinetic energy is given by 
		\begin{equation}
		\label{eq: sp_kinetic_energy}
		 \frac{x'^2 + y'^2 + z'^2}{2},
		\end{equation}
		where $q=(x, y ,z) \in \mathcal{S}_{SH} $ and $(x',y',z')\in T_{q}\mathcal{S}_{SH}$. 
		Let $q  =(x, y, z)  \in \mathcal{S}_{SH}$ and $\tilde{q}= (\tilde{x}, \tilde{y} -1) \in V$ be corresponded via the central projection as 
		\[
		x= \frac{\tilde{x}}{\sqrt{\tilde{x}^2 + \tilde{y}^2 + 1}}, \quad y= \frac{\tilde{y}}{\sqrt{\tilde{x}^2 + \tilde{y}^2 + 1}}, \quad z= -\frac{1}{\sqrt{\tilde{x}^2 +\tilde{y}^2 + 1}}.
		\]
		Then the corresponding push-forward transformation from $ T_{\tilde{q}}V$ to $T_{q} \mathcal{S}_{SH}$ is given by
		\[
		\begin{pmatrix}
		x' \\
		y' \\
		z' 
		\end{pmatrix}
		= 
		\begin{pmatrix}
		\frac{\tilde{y}^2 +1}{(\tilde{x}^2 + \tilde{y}^2 + 1)^{3/2}} & -\frac{\tilde{x}\tilde{y}}{(\tilde{x}^2 + \tilde{y}^2 + 1)^{3/2}} \\
		-\frac{\tilde{x}\tilde{y}}{(\tilde{x}^2 + \tilde{y}^2 + 1)^{3/2}} & \frac{\tilde{x}^2 +1}{(\tilde{x}^2 + \tilde{y}^2 + 1)^{3/2}} \\
		\frac{\tilde{x}}{(\tilde{x}^2 + \tilde{y}^2 + 1)^{3/2}} & \frac{\tilde{y}}{(\tilde{x}^2 + \tilde{y}^2 + 1)^{3/2}}
		\end{pmatrix}
		\begin{pmatrix}
		\tilde{x}' \\ \tilde{y}' 
		\end{pmatrix}
		\]
		Using this, the projection of the spherical kinetic energy is represented as
		\[
		\frac{(1 + \tilde{y}^2)\tilde{x}'^2 - 2\tilde{x}\tilde{y} \tilde{x}' \tilde{y}' + (1 + \tilde{x}^2 )\tilde{y}'^2}{2(\tilde{x}^2 + \tilde{y}^2 +1)^2} 
		\]
		at $\tilde{q} = (\tilde{x},\tilde{y},-1 )\in V$.
		Remember that $(\cdot) '$ is the time derivative with respect to the time parameter $\tau$ defined as \eqref{eq: time_parameter_change}. 
		From this, the spherical kinetic energy in the gnomonic chart has an expression \cite{zhao2021}
		\[
		K_{sp}:= \frac{(1 + \tilde{y}^2)\dot{\tilde{x}}^2 - 2\tilde{x}\tilde{y} \dot{\tilde{x}} \dot{\tilde{y}} + (1 + \tilde{x}^2 )\dot{\tilde{y}}^2}{2} = \frac{\dot{\tilde{x}}^2+ \dot{\tilde{y}}^2+ (\dot{\tilde{x}}\tilde{y} - \tilde{x} \dot{\tilde{y}})^2}{2}
		\]
		at $(\tilde{x},\tilde{y},-1)= (-x/z, -y/z,-1)$ in $V$, }
    		which can be seen as the combination of the planer kinetic energy and the squared angular momentum. 

	       {The spherical potential consists of the terms $-f\tan^2 \theta_{Z_{mid}}$, $-\hat{m}_1 \cot \theta_{Z_1}$, and $-\hat{m}_2 \cot \theta_{Z_2}$. They are expressed in the gnomonic chart $V$ as
		\[
		-f (\tilde{x}^2 + \tilde{y}^2),
		\]
		\[
		-\hat{m}_1 \frac{a \tilde{y} + 1}{\sqrt{(\tilde{y}-a)^2 + (1 + a^2)\tilde{x}^2}},
		\] 
		and 
		\[
		-\hat{m}_2 \frac{-a \tilde{y} + 1}{\sqrt{(\tilde{y}+a)^2 + (1 + a^2)\tilde{x}^2}},
		\]
		respectively. }
		
		{Combining these, we get the following expression of the spherical energy of the {Lagrange} problem} in the gnomonic chart:
		\begin{equation*}
		\begin{split}
		E_{sp}= &\frac{(1 + \tilde{y}^2)\dot{\tilde{x}}^2 - 2\tilde{x}\tilde{y} \dot{\tilde{x}} \dot{\tilde{y}} + (1 + \tilde{x}^2 )\dot{\tilde{y}}^2}{2} \\
		& -\hat{m}_1 \frac{a \tilde{y} + 1}{\sqrt{(\tilde{y}-a)^2 + (1 + a^2)\tilde{x}^2}}  -\hat{m}_2 \frac{-a \tilde{y} + 1}{\sqrt{(\tilde{y}+a)^2 + (1 + a^2)\tilde{x}^2}}- f (\tilde{x}^2 + \tilde{y}^2).
		\end{split}
		\end{equation*}
		{The functional independence of $E_{pl}$ and $E_{sp}$ now follows from these expressions.}
		{ Indeed one can check that the Jacobi matrix
		\[
		J :=
		\begin{pmatrix}
		\frac{d E_{pl}}{d\tilde{x}} & \frac{d E_{pl}}{d\tilde{y}} &  \frac{d E_{pl}}{d \dot{\tilde{x}}} &  \frac{d E_{pl}}{d \dot{\tilde{y}}} \\
		\frac{d E_{sp}}{d\tilde{x}} & \frac{d E_{sp}}{d\tilde{y}} &  \frac{d E_{sp}}{d \dot{\tilde{x}}} &  \frac{d E_{sp}}{d \dot{\tilde{y}}} 
		\end{pmatrix}
		\]
	has rank 2. To see this, it suffices to {observe that the} $2 \times 2$ {sub}matrix 
	\[
	\begin{pmatrix}
	 \frac{d E_{pl}}{d \dot{\tilde{x}}} &  \frac{d E_{pl}}{d \dot{\tilde{y}}} \\
	 \frac{d E_{sp}}{d \dot{\tilde{x}}} &  \frac{d E_{sp}}{d \dot{\tilde{y}}} 
	\end{pmatrix}
	= 
	\begin{pmatrix}
	\dot{\tilde{x}} &  \frac{\dot{\tilde{y}}}{1 + a^2} \\
	(\tilde{y}^2 +1)\dot{\tilde{x}}  -\tilde{x}\tilde{y} \dot{\tilde{y}}& -\tilde{x}\tilde{y} \dot{\tilde{x}} + (\tilde{x}^2 + 1)\dot{\tilde{y}}
	\end{pmatrix}
	\]
	has rank 2. 
}
		
	{Therefore we get an additional first integral for the planar problem from its corresponding spherical problem.} 	
	
	{Similarly,  the same argument equips the spherical problem in $\mathcal{S}_{SH}$ with an additional first integral. }
	
	{We now show that the projected planar energy to $\mathcal{S}_{SH}$ extends to $\mathcal{S}$ in an analytical way, outside of its singularities, thus the integrability extends to the problem on $\mathcal{S}$.}

	{ We first consider the kinetic energy and we provide a differently, more direct argument as in \cite{zhao2021}. }  The {planar} kinetic energy at $\tilde{q}=(\tilde{x},\tilde{y},-1) $ on $V$ is given by   
	\begin{equation}
	\label{eq: pl_kinetic_energy}
	\frac{1}{2}\Bigl(\dot{\tilde{x}}^2 + \frac{\dot{\tilde{y}}{^{2}}}{1+a^2}\Bigr)
	\end{equation}
	for which we have taken the affine change of norm given by (\ref{eq: metric_a}) into account. {We now change the time parameter according to \eqref{eq: time_parameter_change}, then the above quantity can be rewritten into
	\[
	\frac{1}{2}\Bigl(\tilde{x}'^2 + \frac{\tilde{y}'^{2}}{1+a^2}\Bigr)(\tilde{x}^2 + \tilde{y}^2 + 1)^{-2}
	\]
	}
	{Let ${q=(x, y, z)\in \mathcal{S}_{SH}}$ be the centrally projected point of $\tilde{q}= (\tilde{x},\tilde{y},-1)\in V$ on $\mathcal{S}_{SH}$. We have }
		\[
		\tilde{x} = -\frac{x}{z},  \quad \tilde{y} = -\frac{y}{z}.
		\] 
	Then the push-forward transformation from $T_{q} \mathcal{S}_{SH}$ to $T_{\tilde{q}}V$  is given by 
	\[
	{\begin{pmatrix}
	\tilde{x}'\\
	\tilde{y}'
	\end{pmatrix}
	=
	\begin{pmatrix}
	- \frac{1}{z} & 0 & \frac{x}{z^2} \\
	0 & -\frac{1}{z} & \frac{y}{z^2}
	\end{pmatrix}
	\begin{pmatrix}
	x' \\ y' \\ z'
	\end{pmatrix}.}
	\]
	{Using this, we obtain the transformed expression of the planer kinetic energy \eqref{eq: pl_kinetic_energy} defined on $\mathcal{S}_{SH}$ given by 
	\begin{equation}
	\label{eq: pl_kinetic_S_SH}
		\frac{((a^2+1)x'^2+y'^2)z^2-2 z'((a^2+1)xx'+yy')z+ z'^2((a^2+1) x^2 +y^2)}{2(a^2+1)}.
	\end{equation}
	at $q = (x, y, z) \in \mathcal{S}_{SH}$.} 
	{Realize that this expression \eqref{eq: pl_kinetic_S_SH} of the planer kinetic energy can be analytically extended to the whole sphere $\mathcal{S}$.}

	For the potential 
	\[
	 {- \frac{m_1}{\sqrt{\tilde{x}^2 + \frac{(\tilde{y} -a)^2}{1 + a^2}}}- \frac{m_2}{\sqrt{\tilde{x}^2 + \frac{(\tilde{y} +a)^2}{1 + a^2}}}- f \left(\tilde{x}^2 + \frac{\tilde{y}^2}{1+a^2}\right)}
	\]
	of the planer Lagrange problem in $V$, just as in \cite{zhao2021} we apply the change of coordinates 
	\[
	{\tilde{x} = -\frac{x}{z}, \quad \tilde{y}= - \frac{y}{z}}
	\]
	which is derived from the central projection:$ V \ni (\tilde{x},\tilde{y}, -1) \mapsto  (x,y,z) \in \mathcal{S}_{SH} $, and obtain the projected representation
	\begin{equation}
	\label{eq: planer_potential_pr1}
	{- \dfrac{m_1 z}{\sqrt{\frac{a^2 x^2 - a^2 z^2 - 2 a y z + x^2 - y^2}{(a^2 + 1)}}} - \dfrac{m_2 z}{\sqrt{\frac{a^2 x^2 - a^2 z^2 + 2 a y z + x^2 - y^2}{(a^2 + 1)}}}-f \left(\frac{(a^2 +1)x^2 + y^2}{(a^2 + 1)z^2}\right) }
	\end{equation}
	defined on $\mathcal{S}_{SH}$. This quantity can be analytically extended to the whole unit sphere $\mathcal{S}$, outside its singularities, which are the  Kepler centers and their antipodal points and the horizontal equator $\{ (x,y,z) \in \mathcal{S} \mid z=0  \}$, when the corresponding mass parameter is not zero.

\end{proof}

\section{Integrable Lagrange Billiards}
\label{sec: integrable Lagrange billiards}

\subsection{Billiard Correspondence at Confocal Conic Sections}
In this subsection, we consider the problem of projective correspondence of a reflection wall $\tilde{B}$ in $V$ and its corresponding reflection wall $B$ in $\mathcal{S}_{SH}$. Recall that in this case a projective correspondence refers to the property that the laws of reflection in $V$ and on $\mathcal{S}_{SH}$ correspond to each other via the central projection. When this holds, then the billiard trajectories correspond to each other. This property does not hold for general reflection wall $\tilde{B} \subset V$. In this section we show that this nevertheless holds for any conic sections in $V$ centered at $(0, 0, -1)$, with respect to a  \emph{compatible} $\|\cdot\|_{*}$ in $V$, meaning that the $\|\cdot\|$-distance of the foci of the conic section, defined with respect to $\|\cdot\|_{*}$, equals $2 a$.

\begin{prop}\label{prop: reflection wall correspondence} {Any {centered} confocal conic section $\tilde{B} \subset V$ is projected to a {centered} confocal conic section $B \subset \mathcal{S}_{SH}$}. The foci of $B$ are the projection of the foci of $\tilde{B}$ by the central projection. The law of reflection at $\tilde{B}$ with respect to a compatible $\|\cdot\|_{*}$ and the law of reflection at $B \subset \mathcal{S}_{SH}$ correspond to each other.  \end{prop}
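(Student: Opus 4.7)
The plan is to split the argument into two parts: first the geometry of the conic under central projection, then the correspondence of the reflection laws.

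First I would verify that central projection carries a conic $\tilde{B} \subset V$ centered at $(0,0,-1)$ to a spherical conic $B \subset \mathcal{S}_{SH}$. This is a standard projective fact: $\tilde{B}$ is the intersection of $V$ with a quadric cone with vertex at the origin $O$ of $\R^{3}$, and the intersection of this cone with $\mathcal{S}_{SH}$ is precisely the projected image $B$, itself a spherical conic. To identify the foci of $B$, I would apply a linear change of coordinates on $V$ that sends $\|\cdot\|_{*}$ to the standard Euclidean norm; under this change the foci of $\tilde{B}$ with respect to $\|\cdot\|_{*}$ become the usual Euclidean foci, and the compatibility $\|\tilde{Z}_{1}-\tilde{Z}_{2}\|=2a$ pins them down as $\tilde{Z}_{1}=(0,a,-1)$ and $\tilde{Z}_{2}=(0,-a,-1)$. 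That the corresponding points $Z_{1}, Z_{2} \in \mathcal{S}_{SH}$ are indeed the foci of $B$ in the round metric can then be checked directly via the characterization of spherical ellipses by constant sum of great-circle distances, or equivalently by lifting the focal property through Proposition \ref{prop: Kepler_Proj_Cor}.

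Second, I would reduce the correspondence of reflection laws to the optical (focal) property. At every point $\tilde{q}$ of a focal conic in $(V,\|\cdot\|_{*})$, the tangent line bisects externally (ellipse) or internally (hyperbola) the angle between the focal rays $\tilde{q}\tilde{Z}_{1}$ and $\tilde{q}\tilde{Z}_{2}$, angles being measured in $\|\cdot\|_{*}$. Equivalently, a straight line through $\tilde{Z}_{1}$ incident on $\tilde{B}$ at $\tilde{q}$ reflects, under the $\|\cdot\|_{*}$-reflection law, into a line through $\tilde{Z}_{2}$. This optical property fixes the normal direction at $\tilde{q}$, and thus characterizes the reflection law completely. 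The analogous statement holds on $(\mathcal{S}_{SH},g_{st})$ for $B$ with foci $Z_{1},Z_{2}$, with great circles replacing straight lines. With these two ingredients in hand the correspondence is immediate: central projection is a diffeomorphism $V \to \mathcal{S}_{SH}$ that sends straight lines to great circles, preserves incidence with the foci, and maps the tangent line of $\tilde{B}$ at $\tilde{q}$ to the tangent great circle of $B$ at $q$ (tangency is an incidence property of the defining quadric, preserved by central projection). The focal configuration of a tangent line meeting two focal rays at one point is thereby transported from $\tilde{B}$ to $B$, and since this configuration encodes the reflection law on each side, the two reflection laws agree under central projection.

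The main obstacle is the optical property in $(V,\|\cdot\|_{*})$, since the central projection is not angle-preserving and neither is $\|\cdot\|_{*}$ the standard Euclidean norm. The cleanest route is to change coordinates on $V$ linearly so that $\|\cdot\|_{*}$ becomes Euclidean, apply the classical optical theorem for ellipses and hyperbolas, and pull back. A subtlety worth checking explicitly is that the $\|\cdot\|_{*}$-reflection law at $\tilde{B}$ really does coincide with the pullback of the Euclidean reflection law under this linear map, so that the focal-tangent condition genuinely recovers it; this amounts to the elementary fact that elastic reflection is defined purely from the metric and the wall's tangent direction, both of which transform covariantly under linear changes of coordinates.
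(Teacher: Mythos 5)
Your proposal is correct, but it takes a genuinely different route from the paper on the crucial third claim. For the first two claims (conic to conic, foci to foci) the paper argues dynamically: each component of a confocal conic is a Kepler orbit centered at either focus, so Proposition \ref{prop: Kepler_Proj_Cor} transports the conic and both foci at once; your cone-intersection picture plus the same Kepler-orbit argument for the foci is an equivalent packaging. The real divergence is in the reflection-law correspondence. The paper parametrizes the spherical ellipse explicitly, computes the tangent and normal vectors and their push-forwards under the central projection, and verifies by hand that the projected normal is $\|\cdot\|_{a}$-orthogonal to the projected tangent --- and the authors explicitly remark that they did not find a geometrical argument. Your optical-property argument is precisely such a geometric argument, and it does work: the elastic reflection at a point $\tilde{q}$ is the unique linear involution of $T_{\tilde{q}}V$ fixing the tangent line pointwise and sending the incoming focal ray to the outgoing focal ray (an elementary linear-algebra uniqueness that needs no metric once the three concurrent lines and the ray orientations are given), the central projection preserves this configuration because it sends lines to great circles and preserves incidence and tangency, and the spherical reflection satisfies the same characterization by the spherical optical property. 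What your route buys is conceptual transparency and independence of coordinates; what the paper's computation buys is self-containedness.

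Two points in your sketch deserve to be made explicit if you write this up. First, the uniqueness step: you should state and prove the small lemma that a linear involution of a plane fixing a given line pointwise and mapping a given ray (transverse to that line) onto another given ray is unique; this is what lets you conclude that the metrically defined reflections on the two sides coincide with the configuration-defined involution, and hence with each other. Second, the spherical optical property is not free: it requires knowing that $B$ is a level set of the sum (or difference) of geodesic distances to $Z_{1}, Z_{2}$ --- i.e. that these points are the metric foci, which you obtain from Proposition \ref{prop: Kepler_Proj_Cor} --- followed by the standard variational argument that the gradient of such a sum is the bisector of the two unit focal directions. Neither point is a gap in substance, but as written they are asserted rather than proved, and they carry the entire weight that the paper's coordinate computation carries.
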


\begin{proof}
	{Since spherical Kepler problems in $\mathcal{S}_{SH}$ and planer Kepler problems are in correspondence as described in Proposition \ref{prop: Kepler_Proj_Cor}, their orbits are projected to each other up to some time parametrization. Any connected component of confocal conic sections in a plane/on a sphere is an orbit of the planer/spherical Kepler problem with the center at one of the foci. Indeed any confocal ellipse and branch of any confocal hyperbola are orbits of Kepler problems with positive mass-factor, and for hyperbolas, the other branch is obtained as an orbit of Kepler problem with negative mass-factor.  Each connected component of a confocal conic sections is projected to a connected component of a conic section with a focus at the projected center which is an orbit of the spherical/planer Kepler problem with the corresponding projected center. We now look at the other focus and its correspondence. For this purpose, we regard the same conic section as an orbit of the planer/spherical Kepler but with the center at the other focus. Then from the same projective argument, one can see that the other focus is also projected from the corresponding focus.}
	

	{We will now check the projective correspondence between the laws of reflection at confocal conic sections in $V$ and on $\mathcal{S}_{SH}$. We first construct such reflection walls in $V$ and on $\mathcal{S}_{SH}$.}
	
	{For the normalization purpose, we set two foci $\tilde{Z}_1= (0,a,-1)$, and $\tilde{Z}_2 = (0,-a,-1)$ in $V$, then the norm $\| \cdot \|_a$ in $V$ should be chosen as \eqref{eq: metric_a}.}
	
	{We consider a centered elliptic cone given by 
	\begin{equation}
	\label{eq: elliptic_cone}
	\frac{x^2}{\tan^2 \alpha} + \frac{y^2}{\tan^2 \beta} - z^2=0,
	\end{equation}
	with $\alpha , \beta \in [0,\pi/2]$ such that 
	\begin{equation}
	\label{eq: metric_relation_sp}
	1+a^2 = \frac{\tan^2 \beta+1}{\tan^2 \alpha+1}.
	\end{equation}
	}
	{The intersection of $V$ and the cone \eqref{eq: elliptic_cone} gives a centered ellipse 
	\[
	\tilde{F}:=\frac{\tilde{x}^2}{\tan^2 \alpha} + \frac{\tilde{y}^2}{\tan^2 \beta} - 1=0
	\]
	defined in $V$.}
	{The foci $(0,c), (0,-c)$ of the ellipse $\tilde{F} = 0$ depends on the {involved norm, and is computed as}
	\[
	\frac{c^2}{1 + a^2} = \frac{\tan \beta^2}{1 + a^2} - \tan^2 \alpha \Leftrightarrow c^2 = a^2
	\]
	}
	{This means the foci are at two centers $\tilde{Z}_1$ and $\tilde{Z}_2$, thus the ellipse $\tilde{F} = 0$ is confocal. }
	
	{From the first and the second statement of this proposition, the intersection of this elliptic {cone} \eqref{eq: elliptic_cone} and $\mathcal{S}_{SH}$ is again a confocal ellipse on $\mathcal{S}_{SH}$ and is given by the equation
	\begin{equation}
	\label{eq: sp_confocal_ellipse}
	F:=\frac{x^2}{\sin^2 \alpha} + \frac{y^2}{\sin^2 \beta} -1=0.
	\end{equation}
	}
	{To see the projective correspondence of elastic reflections, we show that velocities before and after the reflection at $F=0$ on $\mathcal{S}_{SH}$ is projected to velocities before and after the reflection at $\tilde{F}=0$ in V.} {Unfortunately we have not found a geometrical way to see this. Here we provide a proof with a direct computation.}
	
	{Set 
	\[
	q:=(x,y,z)= \left(\sin \alpha \cos \theta,  \sin \beta \sin \theta, -  \sqrt{1-\sin^2 \alpha \cos^2 \theta - \sin^2 \beta \sin^2 \theta} \right)
	\]
	which lies in a confocal ellipse $F=0$ on $\mathcal{S}_{SH}$. }{The tangent vector to the ellipse at the point $q$ is given by 
	\[
	s:= \left(- \sin \alpha \sin \theta, \sin \beta \cos \theta,  -\frac{(\sin^2 \alpha -\sin^2 \beta)\sin \theta \cos \theta}{\sqrt{1- \sin^2 \alpha \cos^2 \theta - \sin^2 \beta \sin^2 \theta}} \right),
	\]
	and the normal vector is given by 
	\[
	n:= \left(\frac{\sin \alpha \cos \theta}{\tan^2 \alpha}, \frac{\sin \beta \sin \theta}{\tan^2 \beta},   \sqrt{1-\sin^2 \alpha \cos^2 \theta - \sin^2 \beta \sin^2 \theta}   \right).
	\]
	}
	{When the velocity vectors before the reflection at $q$ is given as
	\[
	v= k_1 \cdot s + k_2 \cdot n,
	\]
	where $k_1, k_2 \in \mathbb{R}$ are coefficients, then the reflected vector becomes as  
	\[
	w= k_1 \cdot s - k_2 \cdot n,
	\]
	}
	{Clearly, tangent vectors are projected to tangent vectors along the reflection walls. To see that $v$ and $w$ are projected to velocities before and after the elastic reflection at the corresponding point $\tilde{q}$ in $\tilde{F}=0$, we observe that it suffices to check that the normal vector $n$ is projected to the corresponding normal vector at $\tilde{q} \in V$ with respect to the corresponding metric on $V$, since then $v$ and $w$ are projected to vectors in $V$ having the same tangential component and opposite normal components.}

	{The point $q$ lying in $F=0$} is projected to the point 
	\[
	\tilde{q} := \left(-\frac{x}{z}, -\frac{y}{z},-1 \right)= { \left(\dfrac{\sin \alpha \cos \theta}{\sqrt{1-\sin^2 \alpha \cos^2 \theta - \sin^2 \beta \sin^2 \theta}}, \dfrac{\sin \beta \sin \theta}{\sqrt{1-\sin^2 \alpha \cos^2 \theta - \sin^2 \beta \sin^2 \theta}}, -1\right)}
	\]
	{lying in $\tilde{F}=0$}.
	
	{The corresponding  push-forward transformation from $T_{q} \mathcal{S}_{SH}$ to $T_{\tilde{q}} V$ is given by 
	\[
	\begin{pmatrix}
	\tilde{x}' \\
	\tilde{y}'
	\end{pmatrix}
	=
	\begin{pmatrix}
	\frac{1}{\sqrt{(\cos^2 \alpha  -\cos^2 \beta) \cos^2 \theta + \cos^2 \beta  }}& 0 &  \frac{\sin \alpha \cos \theta}{( \cos^2 \alpha - \cos^2 \beta )\cos^2 \theta  + \cos^2 \beta} \\
	0 &  \frac{1}{\sqrt{(\cos^2 \alpha  -\cos^2 \beta) \cos^2 \theta + \cos^2 \beta  }} & \frac{\sin \beta \cos \theta}{( \cos^2 \alpha - \cos^2 \beta )\cos^2 \theta  + \cos^2 \beta}
	\end{pmatrix}
	\begin{pmatrix}
	x'\\
	y'\\
	z'
	\end{pmatrix}
	\]
	}
	{Using this, the tangent vector $s$ is projected to the (tangent) vector 
	\begin{align*}
	\tilde{s} = &\frac{1}{( \cos^2 \alpha-\cos^2 \beta)\cos^2 \theta + \cos^2 \beta)^{3/2}}(-\sin \alpha \cos^2 \beta \sin \theta, \sin \beta \cos^2 \alpha \cos \theta) 
	\end{align*}
	and the normal vector $n$ is projected to the vector
	\begin{align*}
	\tilde{n} =  \frac{1}{\sin \alpha \sin \beta( \cos^2 \alpha-\cos^2 \beta )\cos^2 \theta + \cos^2 \beta)^{1/2}}(\sin \beta \cos \theta, \sin \alpha \sin \theta).
	\end{align*}
	We {ignore the factors and take}
	\begin{align*}
	&\hat{s} = (-\sin \alpha \cos^2 \beta \sin \theta, \sin \beta \cos^2 \alpha \cos \theta), \\
	&\hat{n} =  (\sin \beta \cos \theta, \sin \alpha \sin \theta).
	\end{align*}
	{Their inner product with respect to $\|\cdot\|_{a}$ is}
	\begin{equation*}
	\langle \hat{s},\hat{n} \rangle_a = - \sin \alpha \sin \beta \cos^2 \beta \sin \theta \cos \theta  + \frac{\tan^2 \alpha +1}{\tan^2 \beta + 1}\sin \alpha \sin \beta \cos^2 \alpha \sin\theta \cos \theta=0.
	\end{equation*}
	}
	{They are thus orthogonal. Hence, the projection $\tilde{n}$ of $n$ is indeed a normal vector at $\tilde{q} \in \{\tilde{F}=0 \}$ in $V$. }

	{Thus, the law of reflection at centered confocal ellipses in $V$ and the law of reflection at centered confocal ellipses on $\mathcal{S}_{SH}$ correspond to each other.}
	{The case of reflections at centered confocal hyperbolae is completely analogous.}

		
\end{proof}

\subsection{Integrability of Lagrange Billiards with Confocal Conic Section Reflection Walls}\label{Subsection: proof of thm 1}

We now prove Theorem \ref{thm: Theorem 1} for the planar and spherical problems.

\begin{proof} 

	
 {From Proposition  \ref{prop: reflection wall correspondence}, we know the spherical and planer law of reflection at centered confocal conic sections are in correspondence, meaning that the incoming and the outgoing velocity vectors of an elastic reflection against such reflection walls in the plane are projected again to the incoming and outgoing velocity vectors of an elastic reflection against the corresponding reflection walls on the sphere, up to a time change which depends only on the point of reflection. Therefore the billiard trajectories on the sphere are projected to billiard trajectories in the plane in our situation, in which the underlying mechanical systems are in correspondence. As a consequence, the energy of the spherical system, written in the gnomonic chart $V$, is invariant under the reflections at a corresponding confocal conic section in $V$. Also, the energy of the planar system, while being expressed on $\mathcal{S}_{SH}$ and further extended to $\mathcal{S}$, is invariant under the reflections on $\mathcal{S}$ at a corresponding confocal conic section on the sphere. We get additional first integrals for both billiard systems independent of their energies. The proof is completed.}
\end{proof}

\subsection{Subcases of Integrable Lagrange Billiards}\label{Subsection: Subcases of Lagrange Billiards}
\subsubsection{The integrable free billiards}
The case $m_{1}=m_{2}=f=0$ of the system \eqref{sys: planar Lagrange}, and the case  $\hat{m}_{1}=\hat{m}_{2}=f=0$ of the system \eqref{sys: spherical Lagrange} correspond respectively to the cases of free motions in the plane and on the sphere. We recover the classical theorem of Birkhoff in the planar and spherical case.

\begin{cor} The free billiards in the plane and on the sphere with any combination of confocal conic section reflection walls are integrable.
\end{cor}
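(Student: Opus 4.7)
The plan is to obtain this corollary essentially for free by specializing Theorem \ref{thm: Theorem 1}. Setting $m_1=m_2=f=0$ in the planar Lagrange system \eqref{sys: planar Lagrange} reduces the underlying natural mechanical system to free motion in $(V,\|\cdot\|_a)$, and setting $\hat{m}_1=\hat{m}_2=f=0$ in the spherical Lagrange system \eqref{sys: spherical Lagrange} reduces it to geodesic motion on $(\mathcal{S}, g_{st})$. Since the projective correspondence of Theorem \ref{thm: Albouy Lagrange} was obtained via the principle of superposition from the Kepler and Hooke constituents, degenerating all three mass parameters to zero preserves the projective correspondence of the underlying natural systems: free motion on $\mathcal{S}_{SH}$ projects to free motion in $V$ with respect to any affine norm $\|\cdot\|_a$.

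Next I would invoke Proposition \ref{prop: reflection wall correspondence}, which does not involve the mass parameters at all: any centered confocal conic section in $V$ with respect to $\|\cdot\|_a$ is projected from a centered confocal conic section on $\mathcal{S}_{SH}$, and the laws of elastic reflection correspond under the central projection. Combining this with the projective correspondence of the free systems, billiard trajectories on the sphere project to billiard trajectories in the plane, so the energy (here purely kinetic) of each system provides a first integral for the other, invariant under the reflections, just as in the proof of Theorem \ref{thm: Theorem 1} given in Subsection \ref{Subsection: proof of thm 1}.

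The only point requiring a short check is functional independence of the two energies in the degenerate case, since Proposition \ref{prop: Albouy Lagrange} was stated for nonzero masses. I would simply observe that the argument there already reduces to the rank of the $2\times 2$ sub-matrix of derivatives with respect to the velocity variables,
\[
\begin{pmatrix}
\dot{\tilde{x}} & \dfrac{\dot{\tilde{y}}}{1+a^2}\\[4pt]
(\tilde{y}^2+1)\dot{\tilde{x}} - \tilde{x}\tilde{y}\dot{\tilde{y}} & -\tilde{x}\tilde{y}\dot{\tilde{x}} + (\tilde{x}^2+1)\dot{\tilde{y}}
\end{pmatrix},
\]
which depends only on the kinetic energies and is therefore generically of rank $2$ regardless of the values of $m_1,m_2,f$. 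Hence the planar and spherical free energies remain functionally independent on an open dense set.

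I do not anticipate any real obstacle: the whole machinery built up in Sections \ref{sec: principles of projective dynamics}--\ref{sec: integrable Lagrange billiards} specializes cleanly. One cosmetic remark I would include is that in the free planar case the choice of parameter $a$ is immaterial up to an affine rescaling of the plane, so the statement recovers Birkhoff's classical integrability of the elliptic billiard and its known spherical analog, as discussed in \cite{Veselov-Alexander} and \cite{Tabachnikov3}, via a uniform projective mechanism.
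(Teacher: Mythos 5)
Your proposal is correct and follows essentially the same route as the paper, which obtains this corollary simply by setting $m_{1}=m_{2}=f=0$ (resp.\ $\hat{m}_{1}=\hat{m}_{2}=f=0$) in Theorem \ref{thm: Theorem 1} and observing that the resulting systems are the free motions in the plane and on the sphere. Your additional check that the functional-independence argument of Proposition \ref{prop: Albouy Lagrange} survives the degeneration---since the relevant $2\times 2$ velocity-derivative submatrix involves only the kinetic parts---is a sensible detail the paper leaves implicit.
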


\subsubsection{The integrable Hooke billiards}
The case $m_{1}=m_{2}=0, f \neq 0$ of the system \eqref{sys: planar Lagrange}, and the case  $\hat{m}_{1}=\hat{m}_{2}=0, f\neq 0$ of the system \eqref{sys: spherical Lagrange} correspond respectively to the Hooke problems in the plane and on the sphere. In this case we recover the following theorem:
\begin{cor} The Hooke billiards in the plane and on the sphere with any combination of confocal conic section reflection walls centered at the Hooke center are integrable.
\end{cor}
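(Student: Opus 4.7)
The plan is to obtain this corollary as an immediate specialization of Theorem \ref{thm: Theorem 1}, whose proof in Subsection \ref{Subsection: proof of thm 1} already handles the planar and the spherical cases. First I would set $m_{1}=m_{2}=0$ in \eqref{sys: planar Lagrange} and $\hat{m}_{1}=\hat{m}_{2}=0$ in \eqref{sys: spherical Lagrange}; the two systems reduce respectively to the planar Hooke problem centered at $Z_{mid}$ and the spherical Hooke problem centered at $Z_{mid}$.

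Next I would observe that the projective correspondence used in the Lagrange proof, being a superposition of the Hooke correspondence (Proposition \ref{prop: Hooke_Proj_Cor}) and the two Kepler correspondences (Proposition \ref{prop: Kepler_Proj_Cor}), reduces in this subcase to a single Hooke correspondence between $(\mathcal{S}_{SH}, g_{st}, f\tan^{2}\theta_{Z_{mid}})$ and $(V, \|\cdot\|_{a}, f\|\tilde{q}\|_{a}^{2})$. The reflection-wall correspondence of Proposition \ref{prop: reflection wall correspondence} holds for arbitrary centered confocal conic sections in $V$ with respect to a compatible norm, and is independent of the force function; it is therefore unaffected by setting the Kepler masses to zero. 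Combining these two ingredients exactly as in the proof of Theorem \ref{thm: Theorem 1}, the projectively related energy gives an additional first integral for both billiard systems that is functionally independent of the respective Hooke energy.

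The one bookkeeping point I would make explicit is the identification of ``confocal conic sections focused at the Kepler centers'' with ``confocal centered conic sections centered at $Z_{mid}$''. Since the Kepler centers no longer enter the force function when $m_{1}=m_{2}=0$, they may be taken to be any pair of points symmetric about $Z_{mid}$; equivalently, any family of centered confocal ellipses or hyperbolas with center $Z_{mid}$ arises as a Lagrange confocal family for a suitable choice of the parameter $a$ and the compatible affine norm $\|\cdot\|_{a}$ in $V$ appearing in Proposition \ref{prop: reflection wall correspondence}. I do not anticipate any genuine analytic obstacle here; the main computational work is already done in the proof of Theorem \ref{thm: Theorem 1}, and the corollary is essentially a statement about which reflection walls remain admissible once the Kepler masses are switched off.
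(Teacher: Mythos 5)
Your proposal is correct and follows the paper's own route: the paper states this corollary without a separate proof, deriving it purely as the subcase $m_{1}=m_{2}=0$, $f\neq 0$ of Theorem \ref{thm: Theorem 1}. Your additional bookkeeping remark — that any centered confocal family with center $Z_{mid}$ is realized as a Lagrange confocal family for a suitable $a$, using that Proposition \ref{prop: Hooke_Proj_Cor} holds for every $a$ — is a legitimate elaboration of what the paper leaves implicit, not a deviation.
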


\subsubsection{The integrable Kepler billiards}
The case $m_{1}=f=0, m_{2} \neq 0$ of the system \eqref{sys: planar Lagrange}, and the case  $\hat{m}_{1}=f=0, \hat{m}_{2}\neq 0$ of the system \eqref{sys: spherical Lagrange} correspond respectively to the Kepler problems in the plane and on the sphere. In this case we recover the following theorem:
\begin{cor} The Kepler billiards in the plane and on the sphere with any combination of confocal conic section reflection walls focused at the Kepler center are integrable.
\end{cor}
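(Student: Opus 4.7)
The plan is to obtain this corollary as an immediate specialization of Theorem \ref{thm: Theorem 1}, by setting $m_1 = f = 0$ (with $m_2 \neq 0$) in the planar Lagrange system \eqref{sys: planar Lagrange} and $\hat{m}_1 = f = 0$ (with $\hat{m}_2 \neq 0$) in the spherical Lagrange system \eqref{sys: spherical Lagrange}. The resulting natural mechanical systems are, by definition, the planar Kepler problem centered at $Z_2$ and the spherical Kepler problem centered at $Z_2$, respectively. So the task reduces to verifying that the apparatus built up for Theorem \ref{thm: Theorem 1} continues to apply in this degenerate regime and that the class of walls named in the corollary coincides with a subclass of walls allowed in Theorem \ref{thm: Theorem 1}.

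First, I would invoke the principle of superposition of projective dynamics recalled in Section \ref{sec: principles of projective dynamics}: dropping the Hooke summand and one of the Kepler summands in the Lagrange force function drops the corresponding summands in its projective partner, leaving the projective correspondence between the planar Kepler problem and the spherical Kepler problem intact. This is precisely the content of Proposition \ref{prop: Kepler_Proj_Cor} applied on its own, and the independence of the two energies as first integrals follows either by specializing the Jacobian argument of Proposition \ref{prop: Albouy Lagrange} (with the Hooke and the second Kepler terms set to zero) or directly from \cite{zhao2021}.

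Second, I would point out that Proposition \ref{prop: reflection wall correspondence} is purely geometric: the projective correspondence of the reflection laws at centered confocal conic sections in $V$ and on $\mathcal{S}_{SH}$ does not depend on the mass parameters at all. Hence any combination of conic sections confocal with foci $Z_1, Z_2$ continues to provide a valid billiard correspondence. To match the statement of the corollary, I observe that a family of conic sections confocal at the Kepler center $Z_2$ (with common second focus $Z_1$) is exactly the admissible family in Theorem \ref{thm: Theorem 1} after relabeling $Z_1$ as an auxiliary geometric point rather than as a mass center.

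Putting these together, the conclusion follows exactly as in Subsection \ref{Subsection: proof of thm 1}: the energy of the corresponding spherical (resp. planar) Kepler problem, expressed in the common chart and analytically extended as needed, is preserved under the reflections at the confocal walls and is functionally independent of the energy of the original system; this yields the required additional first integral. The only mild subtlety to flag is the sign of $\hat{m}_2$, allowed to be either positive or negative, which corresponds to attractive or repulsive Kepler centers and plays no role in the billiard correspondence argument.
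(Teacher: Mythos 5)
Your proposal is correct and follows the paper's own route: the paper presents this corollary precisely as the specialization $m_1 = f = 0$, $m_2 \neq 0$ (resp.\ $\hat{m}_1 = f = 0$, $\hat{m}_2 \neq 0$) of Theorem \ref{thm: Theorem 1}, with no further argument given. Your additional checks (that the superposition principle degenerates harmlessly, that Proposition \ref{prop: reflection wall correspondence} is mass-independent, and that the functional-independence argument survives) are all sound and merely make explicit what the paper leaves implicit.
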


\subsubsection{The integrable Two-Center billiards}
The case $m_{1}, m_{2} \neq 0, f = 0$ of the system \eqref{sys: planar Lagrange}, and the case  $\hat{m}_{1}, \hat{m}_{2}\neq 0, f= 0$ of the system \eqref{sys: spherical Lagrange} correspond respectively to the two-center problems in the plane and on the sphere. In this case we recover the following theorem:
\begin{theorem} The billiards defined with the two-center problems in the plane and on the sphere with any combination of confocal conic section reflection walls focused at the two centers are integrable.
\end{theorem}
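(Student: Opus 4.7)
The plan is to observe that this statement is the specialization of Theorem \ref{thm: Theorem 1} to the case $f=0$, with $m_1, m_2$ (respectively $\hat{m}_1, \hat{m}_2$) not both zero, so essentially nothing new needs to be done: the argument in Subsection \ref{Subsection: proof of thm 1} applies verbatim once the Hooke term is switched off. Setting $f=0$ in \eqref{sys: planar Lagrange} yields precisely the planar two-center problem with Kepler centers at $\tilde{Z}_1$ and $\tilde{Z}_2$, and setting $f=0$ in \eqref{sys: spherical Lagrange} yields the spherical two-center problem with Kepler centers at $Z_1, Z_2 \in \mathcal{S}$.

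First, I would invoke the Albouy--Lagrange projective correspondence (Theorem \ref{thm: Albouy Lagrange} and Proposition \ref{prop: Albouy Lagrange}), whose derivation proceeds by superposing Propositions \ref{prop: Kepler_Proj_Cor} and \ref{prop: Hooke_Proj_Cor} via the principle of superposition. Dropping the Hooke summand leaves exactly the superposition of the two Kepler force fields, so the planar and spherical two-center problems are again in projective correspondence with respect to the affine norm $\|\cdot\|_a$ in the gnomonic chart $V$. The functional independence of the planar and spherical energies $E_{pl}$ and $E_{sp}$ with $f=0$ is inherited from the argument in the proof of Proposition \ref{prop: Albouy Lagrange}, since the $2 \times 2$ Jacobi submatrix with respect to $\dot{\tilde{x}}, \dot{\tilde{y}}$ involves only the kinetic parts and is therefore insensitive to the value of $f$.

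Next, Proposition \ref{prop: reflection wall correspondence} already gives the projective correspondence of the laws of elastic reflection at any centered confocal conic sections in $V$ and on $\mathcal{S}_{SH}$ with foci at the projected Kepler centers, independently of which mechanical system is superposed underneath. Combining this with the projective correspondence of the underlying two-center systems yields a full billiard correspondence: billiard trajectories of the spherical two-center billiard project, up to time reparametrization, to those of the planar two-center billiard and vice versa. Consequently, the energy of the opposite system, pulled back to the gnomonic chart and extended analytically to $\mathcal{S}$ in the spherical direction exactly as in the proof of Proposition \ref{prop: Albouy Lagrange}, provides in each case an additional first integral independent of the energy of the system itself and invariant under the reflections. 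There is no real obstacle here; the only point worth a brief comment is the nondegeneracy of the Jacobi-matrix argument after deleting the Hooke term, which is immediate from the structure of the kinetic parts.
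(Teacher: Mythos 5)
Your proposal is correct and follows exactly the paper's route: the paper states this result precisely as the subcase $m_1, m_2 \neq 0$, $f=0$ of Theorem \ref{thm: Theorem 1}, whose proof in Subsection \ref{Subsection: proof of thm 1} rests on the same ingredients you cite (Theorem \ref{thm: Albouy Lagrange}, Proposition \ref{prop: Albouy Lagrange}, and Proposition \ref{prop: reflection wall correspondence}). Your added remark that the functional-independence argument survives setting $f=0$ because the relevant $2\times 2$ Jacobi submatrix involves only the kinetic parts is a correct and worthwhile clarification, but it does not change the approach.
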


\subsubsection{The integrable billiards with superposition of Hooke and Kepler Problems}
The case $m_{1}, f \neq 0, m = 0$ of the system \eqref{sys: planar Lagrange}, and the case  $\hat{m}_{1}, f\neq 0, \hat{m}_{2}= 0$ of the system \eqref{sys: spherical Lagrange} correspond respectively to the superposition of a Hooke and a Kepler problems in the plane and on the sphere. In this case we recover the following theorem:
\begin{cor} The billiards defined with the superposition of a Hooke and a Kepler problems in the plane and on the sphere with any combination of confocal conic section reflection walls focused the Kepler center and centered at the Hooke center are integrable.
\end{cor}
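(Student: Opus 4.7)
The plan is to recognize this corollary as the direct specialization of Theorem~\ref{thm: Theorem 1} in which one of the two Kepler mass-factors is set to zero. Specifically, I take $m_{2}=0$ in the planar system \eqref{sys: planar Lagrange} and $\hat{m}_{2}=0$ in the spherical system \eqref{sys: spherical Lagrange}. The Lagrange force function then reduces to the superposition of a single Kepler term, centered at $Z_{1}$, with the Hooke term centered at $Z_{mid}=(Z_{1}+Z_{2})/2$, which is precisely the system described in the statement of the corollary.

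Next I would match the reflection-wall families. The conic sections allowed here are centered at the Hooke center and have one focus at the remaining Kepler center $Z_{1}$. Since the two foci of any conic are symmetric about its center, the second focus is automatically the reflection $Z_{2}$ of $Z_{1}$ through $Z_{mid}$. Hence the admissible reflection walls of this corollary coincide with the centered confocal conic sections with foci at $Z_{1}$ and $Z_{2}$ treated in Theorem~\ref{thm: Theorem 1}; no enlargement of the hypothesis is needed.

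Finally I would invoke Theorem~\ref{thm: Theorem 1}. Its proof in Subsection~\ref{Subsection: proof of thm 1} remains fully valid when one of the Kepler mass-factors vanishes: the projective correspondence of the Lagrange system (Theorem~\ref{thm: Albouy Lagrange}) is constructed by superposition from Propositions~\ref{prop: Kepler_Proj_Cor} and~\ref{prop: Hooke_Proj_Cor}, so it accommodates any values of $m_{1},m_{2},f$, including zero, while the reflection-wall correspondence (Proposition~\ref{prop: reflection wall correspondence}) is purely geometric and is insensitive to the mass-factors. Consequently the energy of the sibling system, expressed in the common gnomonic chart, furnishes an additional first integral for the billiard system, independent of its own energy.

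No genuine obstacle arises here; the only subtle point is the identification of the reflection-wall family carried out in the second paragraph, after which the integrability claim is obtained by direct specialization of the already-established Theorem~\ref{thm: Theorem 1}.
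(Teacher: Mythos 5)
Your proposal is correct and follows exactly the paper's route: the corollary is obtained by setting one Kepler mass-factor to zero in the Lagrange systems \eqref{sys: planar Lagrange} and \eqref{sys: spherical Lagrange} and invoking Theorem \ref{thm: Theorem 1}. Your additional observation that a conic centered at the Hooke center with one focus at the Kepler center automatically has its second focus at the mirror point, so the wall family matches that of the theorem, is a useful explicit check that the paper leaves implicit.
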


\section{The Plane-Hyperboloid Projection and integrable Lagrange Billiards in the Hyberbolic Plane}
\label{sec: hyperbolic Lagrange billiards}

We now discuss the projection between the plane and the hyperbolic space, with the hyberboloid model for the latter.  







\subsection{The hyperboloid-Plane Projection}
{We consider the Minkowski space  $\mathbb{R}^{2,1}$, equipped with the pseudo-Riemannian metric
\begin{equation}
\label{eq: hyperbolid_metric}
dx^2 + dy^2 - dz^2.
\end{equation}
Consider the embedded two-sheeted hyperboloid given by the equation
$$\mathcal{H}:=\{(x,y,z) \in \mathbb{R}^{2,1}  \mid x^2 + y^2 -z^2 = -1   \}$$
and its {lower} sheet
$$\mathcal{H}_S:=\{(x,y,z) \in \mathcal{H} \mid z<0 \}.$$ } 
 {The restriction of the pseudo-Riemannian metric $dx^2 + dy^2 - dz^2$ to $\mathcal{H}$ is Riemannian, and equipped both sheets of $\mathcal{H}$ with a \emph{hyperbolic metric}. {The space $\mathcal{H}_S$ equipped with this hyperbolic metric is called the hyperboloid model of the hyperbolic plane.}}


{We consider the plane $V_{H}=\{ z= {-1} \} \subset \mathbb{R}^{2,1}$ which is tangent to $\mathcal{H}_{S}$ at its pole $(0,0,{-1})$.} The central projection from the origin of $ \mathbb{R}^{2,1}$ projects the lower sheet of hyperboloid $\mathcal{H}_S$ onto the unit disc $D:=\{(x,y) \in V_{{H}} \mid x^2 +y^2 {<} 1  \}$ {in} $V$, {and equips $D$ with an induced hyperbolic metric, making it the Klein disc model for the hyperbolic plane.}

{We denote by $\| \cdot \|_H$ the Minkowski {norm}  in  $\mathbb{R}^{2,1}$. 
Just as in the case of spherical-plane correspondence in Section \ref{Subsection: The Hemisphere-Plane Projection},} a force filed $F_{H}$ on $\mathcal{H}_{S}$ is carried to a force field $F_V$ on $V$ by the central projection. 

{Indeed, in this setting, a point $q \in \mathcal{H}_{S}$} is centrally projected to the point $\tilde{q} \in V_H$:
$$
q = \| \tilde{q} \|_H^{-1} \tilde{q}.
$$

{Suppose we have a natural mechanical system in $V_{H}$ with the equations of motion
$$\ddot{\tilde{q}}=F_{V}(\tilde{q}).$$
Thus we have
$$
\dot{q}  = \| \tilde{q} \|_H^{-2} (\dot{\tilde{q}}  \| \tilde{q} \|_H - \langle \nabla \| \tilde{q}\|_H , \dot{\tilde{q}} \rangle \tilde{q}).
$$
Again, we take a new time variable $\tau$ for the system on $\mathcal{H}_{S}$, and write $' :=\dfrac{d}{d\tau}$ so that 
\begin{equation}
\label{eq: time_parameter_change_hyp}
{\dfrac{d}{d \tau}= \| \tilde{q} \|_H^{2}\dfrac{d}{d t}},
\end{equation}
and consequently
$$
q'  =  (\dot{\tilde{q}}  \| \tilde{q} \|_H - \langle \nabla \| \tilde{q}\|_H , \dot{\tilde{q}} \rangle_H \tilde{q}),
$$
$$q''= \| \tilde{q} \|_H^{2} (\ddot{\tilde{q}}  \| \tilde{q} \|_H - (\langle \nabla \| \tilde{q}\|_H , \ddot{\tilde{q}} \rangle+\langle \overset{\cdot}{(\nabla \| \tilde{q}\|_H)} , \dot{\tilde{q}} \rangle) \tilde{q}).$$
We thus have
\begin{equation}\label{eq: second derivative correspondence_hyp}
q''=\| \tilde{q} \|_{H}^{2} (F_{V}(\tilde{q})  \| \tilde{q} \|_{H}- \lambda(\tilde{q}, \dot{\tilde{q}}, \ddot{\tilde{q}}) \tilde{q})
\end{equation}
in which we have set $\lambda(\tilde{q}, \dot{\tilde{q}}, \ddot{\tilde{q}})=\langle \nabla \| \tilde{q}\|_H , \ddot{\tilde{q}} \rangle+\langle \overset{\cdot}{(\nabla \| \tilde{q}\|_H)} , \dot{\tilde{q}} \rangle$.} {The gradient and the inner product are defined with respect to the pseudo-Riemannian metric \eqref{eq: hyperbolid_metric}.}

{The Levi-Civita connection of a pseudo-Riemannian manifold projects to the Levi-Civita connection of its embedded submanifold. In our case, $\mathcal{H}_{S}$ is Riemannian with the induced metric from $\R^{2,1}$. So again by projecting both sides of this equation to the tangent space $T_{q}  \mathcal{H}_{S}$, we get the equations of motion of the form
$$\nabla_{q'} q'=F_{H}(q).$$
}

{We see that to switch from the plane-sphere correspondence as in Section \ref{Subsection: The Hemisphere-Plane Projection} to the plane-hyperboloid correspondence with our setting, it is enough to properly change some signs in proper places while the others are completely similar. We shall make use of this similarity in the sequel to omit certain details.}



\subsection{Projective Properties of the Hooke and Kepler Problems in the Hyperbolic plane}


\subsubsection{The Kepler Problems and The Hooke Problems}

We first discuss the case of the Kepler problems. The hyperbolic Kepler problem is the natural mechanical system $(\mathcal{H}_{S}, g_{H}, \hat{m} \coth \theta_{Z})$, in which $g_{H}$ is the induced hyperbolic metric on $\mathcal{H}_{S}$, $\hat{m} \in \R$ is the mass-factor and the angle $\theta_{Z}$ is the central hyperbolic angle the moving particle made with  the center $Z \in \mathcal{H}_{S}$. 

{Without loss of generality, we set $Z=  \left(0, \frac{a}{\sqrt{1 - a^2}}, -\frac{1}{\sqrt{1 - a^2}}\right) \in \mathcal{H}_{S}$ for $a \in (-1, 1)$, and $\tilde{Z}=(0, a, -1)$ the projection point of $Z$ in $D \subset V_{H}$.} For $\tilde{q}=(\tilde{x}, \tilde{y}, -1) \in V_H$ we define
\begin{equation}
\label{eq: metric_a_hyp}
\| \tilde{q}\|_{{a}}= \sqrt{ \tilde{x}^2 + \frac{\tilde{y}^2}{1-a^2}}.
\end{equation}

{Similar to the case of plane-spherical correspondence, we have}

\begin{prop} 
	\label{prop: Kepler_Proj_Cor_hyp}
	{The hyperbolic Kepler problem {$(\mathcal{H}_{S}, g_{H}, \hat{m} \coth \theta_{Z})$} projects to the planar Kepler problem $(V_{H}, \|\cdot \|_{a}, m/\|\tilde{q}-\tilde{Z}\|_{a})$ such that {$m = \dfrac{\hat{m}}{\sqrt{1 - a^2}}$.}}
\end{prop}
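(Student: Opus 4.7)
The plan is to mirror the proof of Proposition \ref{prop: Kepler_Proj_Cor}, with the sphere $\mathcal{S}_{SH}$ replaced by the lower hyperboloid sheet $\mathcal{H}_{S}$, the Euclidean norm on $\mathbb{R}^{3}$ replaced by the Minkowski pseudo-norm $\|\cdot\|_{H}$ on $\mathbb{R}^{2,1}$, and the affine norm $\|\cdot\|_{a}$ of \eqref{eq: metric_a_hyp} in place of the spherical one. First I would start from the planar Kepler force field
\[
F_{V}(\tilde{q}) = -m \, \|\tilde{q}-\tilde{Z}\|_{a}^{-3}\,(\tilde{q}-\tilde{Z})
\]
on $V_{H}$ and plug it into the right-hand side of \eqref{eq: second derivative correspondence_hyp}. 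The radial second term $\lambda(\tilde{q},\dot{\tilde{q}},\ddot{\tilde{q}})\,\tilde{q}$ projects to zero in $T_{q}\mathcal{H}_{S}$, while the first term $\|\tilde{q}\|_{H}^{3}\,F_{V}(\tilde{q})$ is central at $\tilde{Z}$ in $V_{H}$, and hence produces a central force field $F_{H}$ on $\mathcal{H}_{S}$ aimed at $Z$ (or its antipodal ideal direction) after central projection. It therefore suffices to compute the magnitude of its projection to $T_{q}\mathcal{H}_{S}$.

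Next I would restrict the problem to an oriented plane $W\subset\mathbb{R}^{2,1}$ through $O=(0,0,0)$, $Z$, and $\tilde{Z}$, meeting $V_{H}$ in a line $\ell$, and let $\phi$ be the angle between $\ell$ and the line $\{(0,y,-1)\}$, with perpendicular foot $G$ of $Z_{0}=(0,0,-1)$ onto $\ell$. The direct analog of the spherical computation produces, along $\ell$,
\[
\|\tilde{q}-\tilde{Z}\|_{a} \;=\; \|\tilde{q}-\tilde{Z}\|\,\sqrt{\frac{1-a^{2}\sin^{2}\phi}{1-a^{2}}},
\qquad G = (-a\sin\phi\cos\phi,\,-a\sin^{2}\phi,\,-1),
\]
just with $1+a^{2}$ replaced by $1-a^{2}$. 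The projection onto $T_{q}\mathcal{H}_{S}$ contributes a factor $\|G\|_{H}/\|\tilde{q}\|_{H}$ in place of the Euclidean $\|G\|/\|\tilde{q}\|$ of the spherical case, and I would then collect the factors as in the spherical proof.

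Finally, to identify the magnitude with $|\hat{m}|\sinh^{-2}\theta_{Z}$, I would apply the hyperbolic law of sines to the triangle $\tilde{q}O\tilde{Z}$ viewed inside the Minkowski space, using that $\|Z\|_{H}=1$ and $\|\tilde{Z}\|_{H}=\sqrt{1-a^{2}}$. Setting $m=\hat{m}/\sqrt{1-a^{2}}$ then produces exactly the strength $|\hat{m}|\sinh^{-2}\theta_{Z}$, and restricting to any hyperbolic geodesic through $Z$ recognizes this as the gradient of the force function $\hat{m}\coth\theta_{Z}$, completing the identification with the hyperbolic Kepler problem.

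The main obstacle I expect is the careful bookkeeping of signs and Lorentzian inner products: the Minkowski pseudo-norm $\|\tilde{q}\|_{H} = \sqrt{1-\tilde{x}^{2}-\tilde{y}^{2}}$ is only real inside the Klein disc $D$, and the hyperbolic trigonometric identities (in particular the hyperbolic law of sines and the formula $\cosh\theta_{Z} = -\langle q,Z\rangle_{H}$) differ from their spherical counterparts by signs that must be tracked through every intermediate step to avoid accidentally producing $|\hat{m}|\sin^{-2}\theta_{Z}$ or an extraneous factor of $1-a^{2}$ in the final magnitude. Once this bookkeeping is fixed, the structural parallel with the spherical proof makes the remaining steps essentially routine.
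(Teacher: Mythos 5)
Your proposal is correct, but it takes the longer of the two routes the paper has in mind. The paper's actual proof of Proposition \ref{prop: Kepler_Proj_Cor_hyp} is a two-line analytic-continuation argument: one formally substitutes $(x,y,a,z)\mapsto(ix,iy,ia,z)$ in the equations of motion of the spherical case and invokes analyticity, so that every identity established in the proof of Proposition \ref{prop: Kepler_Proj_Cor} (with $1+a^2$ becoming $1-a^2$, $\sin$ becoming $\sinh$, etc.) transfers automatically; the paper only \emph{remarks} that the geometric proof also carries over using hyperbolic geometry, without executing it. You execute exactly that geometric route, and your bookkeeping is right: the restriction $\|\tilde q-\tilde Z\|_a=\|\tilde q-\tilde Z\|\sqrt{(1-a^2\sin^2\phi)/(1-a^2)}$, the foot $G$ (which is unchanged because the direction of $\ell$ has vanishing $z$-component, so Euclidean and Minkowski orthogonality to $\ell$ agree), the values $\|G\|_H=\sqrt{1-a^2\sin^2\phi}$, $\|\tilde Z\|_H=\sqrt{1-a^2}$, and the closing identity $\sinh^2\theta_Z=\|\tilde q-\tilde Z\|^2\,\|G\|_H^2/(\|\tilde q\|_H^2\|\tilde Z\|_H^2)$ all check out (the last one follows from $\cosh\theta_Z=-\langle q,Z\rangle_H$ and $\sin^2\phi=\tilde x^2/\|\tilde q-\tilde Z\|^2$), yielding $|\hat m|\sinh^{-2}\theta_Z$ and hence the force function $\hat m\coth\theta_Z$. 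What your approach buys is a self-contained verification with all Lorentzian signs made explicit; what it costs is redoing the entire computation, whereas the paper's substitution argument gets the result essentially for free from the spherical case, at the price of having to trust (or briefly justify) that the relevant quantities depend real-analytically on the parameter being continued.
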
 

By analyticity, a proof of this proposition follows from Proposition \ref{prop: Kepler_Proj_Cor} by formally substituting $(x, y, a, z)$ by $(i x, i y, i a, z)$ and argue with the equations of motion. The geometric proof of Proposition \ref{prop: Kepler_Proj_Cor} also carries over to this hyperbolic case, but now using hyperbolic geometry.

Our second case is the Hooke problems. The hyperbolic Hooke problem is the natural mechanical systems given by $(\mathcal{H}_{S}, g_{H}, f \tanh^{2} \theta_{Z})$ with the {mass}-factor $f \in \R$.

Analogously as in the Kepler case, we get the following correspondences between Hooke systems.

\begin{prop} 
	\label{prop: Hooke_Proj_Cor_hyp}
	The hyperbolic Hooke problem $(\mathcal{H}_{S}, g_{H}, f \tanh^{2} \theta_{Z})$ with $Z=(0,0,-1)$ projects to any of the Hooke problems in $V$ of the form $(V_H, \|\cdot|\|_{a}, f \|\tilde{q}\|_{a}^{2})$ for any $a \in \R$. 
\end{prop}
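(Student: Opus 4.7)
The plan is to mirror the proof of Proposition \ref{prop: Hooke_Proj_Cor}, replacing the plane-hemisphere central projection by the plane-hyperboloid one from Section \ref{sec: hyperbolic Lagrange billiards} and using hyperbolic trigonometry in place of spherical. The structure is: (i) write down the planar force field, (ii) push it through \eqref{eq: second derivative correspondence_hyp} and reduce to computing a norm, (iii) integrate to recover the force function.

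First, I would record the planar force field of $(V_H, \|\cdot\|_a, f\|\tilde{q}\|_a^2)$. A direct gradient computation with respect to $\|\cdot\|_a$ (whose inverse metric $\mathrm{diag}(1, 1-a^2)$ exactly cancels the $(1-a^2)^{-1}$ coefficient appearing in $\|\tilde{q}\|_a^2$) yields
\begin{equation*}
F_V(\tilde{q}) = 2f(\tilde{q} - Z), \qquad Z = (0,0,-1),
\end{equation*}
and this expression is manifestly independent of $a$. This is exactly the observation that drives the result: a single planar force field corresponds to the whole one-parameter family of planar Hooke systems, so no matter which $a$ is chosen, the hyperbolic correspondent is the same.

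Second, I would plug $F_V$ into \eqref{eq: second derivative correspondence_hyp}. Since $\tilde{q}$ is proportional to $q$, the radial correction $\lambda\tilde{q}$ on the right-hand side projects to zero in $T_q\mathcal{H}_S$, so the hyperbolic force field $F_H$ is precisely the tangential component of $\|\tilde{q}\|_H^3\,F_V(\tilde{q})$. Rotational invariance around the axis through $O$ and $Z=(0,0,-1)$ (inherited from the axial symmetry of the planar potential) implies that $F_H$ is central at $Z$, so it is enough to compute its magnitude along a hyperbolic geodesic through $Z$. Using the identities $\cosh\theta_Z = 1/\|\tilde{q}\|_H$ and $\tanh\theta_Z = \sqrt{\tilde{x}^2+\tilde{y}^2} = \|\tilde{q}-Z\|$, the direct analogue of the computation carried out on the hemisphere gives
\begin{equation*}
\|F_H\| = 2|f|\,\frac{\sinh\theta_Z}{\cosh^3\theta_Z}.
\end{equation*}

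Third, since $\frac{d}{d\theta_Z}\bigl(f\tanh^2\theta_Z\bigr) = 2f\,\sinh\theta_Z/\cosh^3\theta_Z$, integrating along any hyperbolic geodesic through $Z$ identifies $F_H$ with the gradient of $f\tanh^2\theta_Z$ on $\mathcal{H}_S$, i.e.\ with the hyperbolic Hooke field centered at $Z$. The sign of $f$ determines whether $Z$ is attractive or repulsive, exactly as in the planar case. The main technical obstacle is simply bookkeeping Minkowski signs when computing $\nabla\|\tilde{q}\|_H$, $\lambda$ and the $T_q\mathcal{H}_S$-projection; once the spherical template is in hand this amounts to routine substitutions $\cos/\sin \rightsquigarrow \cosh/\sinh$ with the compensating signs coming from the $-dz^2$ term, and no new conceptual difficulty arises.
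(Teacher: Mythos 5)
Your proof is correct and follows exactly the route the paper intends: the paper gives no separate argument for this proposition, simply declaring it analogous to the spherical case, and your write-up carries out that analogy in the same way as the proof of Proposition~\ref{prop: Hooke_Proj_Cor} --- the $a$-independent planar force field $2f(\tilde q - Z)$, the vanishing of the radial term under projection to $T_q\mathcal{H}_S$, and the norm computation yielding $2|f|\sinh\theta_Z/\cosh^{3}\theta_Z$, which integrates to $f\tanh^{2}\theta_Z$. No gaps.
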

In contrast to the Kepler case, we can freely choose the parameter $a$ in the affine changed norm $\| \cdot \|_a$ for the Hooke problems. 

\subsection{The Lagrange Problems in the Plane and in the Hyperbolic Plane} 
By superposing two hyperbolic Kepler problems and a hyperbolic Hooke problem, we obtain the hyperbolic Lagrange problem 
\[
(\mathcal{H}, g_{H},  \hat{m}_1 \coth \theta_{Z_1}+ \hat{m}_2 \coth \theta_{Z_2}+f \tanh^{2} \theta_{Z_{mid}}),
\]
for which we assume that $Z_1$ and $Z_2$ are in the same sheet of two-sheeted hyperboloid $\mathcal{H}$. 
Here, $\theta_{P}$ is a hyperbolic central angle of the moving particle to a point $P \in \mathcal{H}$

By combining the previous Propositions \ref{prop: Kepler_Proj_Cor_hyp} and \ref{prop: Hooke_Proj_Cor_hyp}, we get the following correspondence on the Lagrange problems in the plane and in the hyperbolic plane as an analogy of the spherical case. 

\begin{theorem}
	\label{thm: Albouy Lagrange_hyp}
	In the case $Z_{mid}$ is vertical, then the hyperbolic Lagrange problem on $\mathcal{S}_{H}$ with masses $\hat{m}_1, \hat{m}_2,f \in \R$ is projected to the planer Lagrange problem in $V_{H}$, with the projections of the Kepler and the Hooke centers as its own Kepler and Hooke centers, with the affine norm $\| \cdot \|_a$ and parameters $m_1,m_2,f$ as determined by Proposition \ref{prop: Kepler_Proj_Cor_hyp}.
\end{theorem}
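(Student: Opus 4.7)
The plan is to mimic the proof of Theorem \ref{thm: Albouy Lagrange} (the spherical case) by invoking the principle of superposition of projective dynamics. The hyperbolic setting is entirely parallel to the spherical one once we have Propositions \ref{prop: Kepler_Proj_Cor_hyp} and \ref{prop: Hooke_Proj_Cor_hyp} in hand, so no new geometric input is required.

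First I would fix the normalization. Assuming $Z_{mid}=(0,0,-1)\in\mathcal{H}_S$ and that $Z_1, Z_2$ are symmetric with respect to the vertical axis through $Z_{mid}$ on the sheet $\mathcal{H}_S$, write
\[
Z_1=\Bigl(0,\tfrac{a}{\sqrt{1-a^2}},-\tfrac{1}{\sqrt{1-a^2}}\Bigr),\qquad Z_2=\Bigl(0,-\tfrac{a}{\sqrt{1-a^2}},-\tfrac{1}{\sqrt{1-a^2}}\Bigr),
\]
for some $a\in(-1,1)$, whose central projections to $V_H$ are $\tilde Z_1=(0,a,-1)$ and $\tilde Z_2=(0,-a,-1)$. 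Endow $V_H$ with the affine norm
\[
\|\tilde q\|_a=\sqrt{\tilde x^{2}+\tfrac{\tilde y^{2}}{1-a^{2}}},
\]
given by \eqref{eq: metric_a_hyp}. The point of this choice is that it is compatible with \emph{both} Kepler centers simultaneously: the centers $\tilde Z_1$ and $\tilde Z_2$ are symmetric about the origin along the $\tilde y$-axis, so the parameter $a$ that Proposition \ref{prop: Kepler_Proj_Cor_hyp} associates to each one is the same (up to sign, which does not affect $\|\cdot\|_a$).

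Next I would apply the individual correspondences. By Proposition \ref{prop: Kepler_Proj_Cor_hyp}, the hyperbolic Kepler force field with center $Z_i$ centrally projects to the planar Kepler force field on $(V_H,\|\cdot\|_a)$ with center $\tilde Z_i$ and mass-factor $m_i=\hat m_i/\sqrt{1-a^{2}}$, for $i=1,2$. By Proposition \ref{prop: Hooke_Proj_Cor_hyp}, the hyperbolic Hooke force field with vertical center $Z_{mid}$ projects to the planar Hooke force field on $(V_H,\|\cdot\|_a)$ with the same parameter $f$; crucially, the conclusion of that proposition holds for \emph{any} affine norm, hence in particular for the one we have just fixed. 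Therefore all three projected force fields on $V_H$ are derived from potentials with respect to the common metric $\|\cdot\|_a$.

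Finally, by the principle of superposition stated in Section \ref{sec: principles of projective dynamics}, the central projection sends the sum of the three hyperbolic force fields to the sum of the three planar ones. Since each summand is derived from a potential with respect to $\|\cdot\|_a$, so is the sum, and the resulting planar system is exactly the Lagrange problem
\[
\bigl(V_H,\|\cdot\|_a,\ \tfrac{m_1}{\|\tilde q-\tilde Z_1\|_a}+\tfrac{m_2}{\|\tilde q-\tilde Z_2\|_a}+f\|\tilde q\|_a^{2}\bigr),
\]
with the parameters described in the statement. The only mild subtlety, and what I expect to be the sole point worth emphasizing, is the compatibility check above that the single affine norm $\|\cdot\|_a$ works for both Kepler centers at once; once that is noted, the theorem follows immediately from superposition.
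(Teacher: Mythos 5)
Your proposal is correct and follows essentially the same route as the paper: the paper derives Theorem \ref{thm: Albouy Lagrange_hyp} by combining Propositions \ref{prop: Kepler_Proj_Cor_hyp} and \ref{prop: Hooke_Proj_Cor_hyp} through the principle of superposition, exactly mirroring the proof of the spherical case (Theorem \ref{thm: Albouy Lagrange}), including the observation that the single affine norm $\|\cdot\|_a$ is common to both Kepler problems (since it depends only on $a^2$) and to the Hooke problem (whose correspondence holds for any affine norm). Your write-up is, if anything, slightly more explicit than the paper's one-line justification.
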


From this theorem, we get the following proposition as a consequence.
\begin{prop}
	The energy of the hyperbolic Lagrange problem induces an additional first integral for the planer Lagrange problem independent of its energy. Vice versa, the energy of the planer Lagrange problem induces an additional first integral for the hyperbolic Lagrange problem independent of its energy.
\end{prop}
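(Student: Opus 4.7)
The plan is to mirror the proof of Proposition \ref{prop: Albouy Lagrange} in the hyperbolic setting, using Theorem \ref{thm: Albouy Lagrange_hyp} in place of Theorem \ref{thm: Albouy Lagrange} and the plane--hyperboloid projection in place of the plane--hemisphere one. By Theorem \ref{thm: Albouy Lagrange_hyp}, the hyperbolic Lagrange problem on $\mathcal{H}_S$ and the planar Lagrange problem in $V_H$ with affine norm $\|\cdot\|_a$ given by (\ref{eq: metric_a_hyp}) have their orbits in correspondence up to the time reparametrization (\ref{eq: time_parameter_change_hyp}). Consequently any first integral of one system is automatically a first integral of the other, so each of the two energies is conserved along the other flow. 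It then remains only to establish functional independence.

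Next I would express both energies in the common gnomonic chart $V_H$. Normalizing $Z_{mid}=(0,0,-1)$, $\tilde Z_1=(0,a,-1)$, $\tilde Z_2=(0,-a,-1)$ with $a\in(-1,1)$, the planar energy reads
\begin{equation*}
E_{pl} = \frac{1}{2}\Bigl(\dot{\tilde x}^2 + \frac{\dot{\tilde y}^2}{1-a^2}\Bigr) - f\Bigl(\tilde x^2 + \frac{\tilde y^2}{1-a^2}\Bigr) - \frac{m_1}{\sqrt{\tilde x^2+\frac{(\tilde y-a)^2}{1-a^2}}} - \frac{m_2}{\sqrt{\tilde x^2+\frac{(\tilde y+a)^2}{1-a^2}}}.
\end{equation*}
For the hyperbolic energy $E_H$ I would use the central projection $x=\tilde x/\sqrt{1-\tilde x^2-\tilde y^2}$, $y=\tilde y/\sqrt{1-\tilde x^2-\tilde y^2}$, $z=-1/\sqrt{1-\tilde x^2-\tilde y^2}$, together with the Minkowski kinetic energy $\tfrac{1}{2}(x'^2+y'^2-z'^2)$ on $\mathcal{H}_S$. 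Combined with the reparametrization (\ref{eq: time_parameter_change_hyp}), this yields a kinetic term which is the sign-adjusted analogue of the $K_{sp}$ computed in the spherical proof, while the potentials $-\hat m_i\coth\theta_{Z_i}$ and $-f\tanh^{2}\theta_{Z_{mid}}$ pull back to explicit algebraic functions of $(\tilde x,\tilde y)$ on the Klein disc.

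Functional independence then follows by examining the $2\times 2$ velocity-derivative sub-block of the Jacobian of $(E_{pl},E_H)$ with respect to $(\dot{\tilde x},\dot{\tilde y})$. One row is $\bigl(\dot{\tilde x},\,\dot{\tilde y}/(1-a^2)\bigr)$, while the other row has entries coming from the twisted hyperbolic kinetic form; a direct computation then shows that the determinant is generically nonzero, exactly as in the spherical proof. A cleaner route, suggested by the analytic-continuation observation preceding Proposition \ref{prop: Kepler_Proj_Cor_hyp}, is to apply the formal substitution $(x,y,a,z)\mapsto(ix,iy,ia,z)$ to $E_{sp}$ and to its Jacobian computed in the spherical proof, which transfers the generic non-degeneracy of the velocity sub-block directly to the hyperbolic setting.

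The main obstacle is the sign bookkeeping in the pseudo-Riemannian pushforward and in the restriction of $x'^2+y'^2-z'^2$ to $T_q\mathcal{H}_S$, and checking that the resulting quadratic form in $(\dot{\tilde x},\dot{\tilde y})$, combined with the diagonal quadratic form appearing in $E_{pl}$, still produces a rank-two Jacobian block. In contrast with the spherical case, no analytic extension beyond the chart is needed, since the single sheet $\mathcal{H}_S$ is globally covered by the Klein disc $D\subset V_H$; the additional first integral lives on the regular part of the configuration space away from the Kepler centers.
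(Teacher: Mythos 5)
Your proposal is correct and matches the paper's intent exactly: the paper gives no separate proof of this proposition, presenting it as a direct consequence of Theorem \ref{thm: Albouy Lagrange_hyp} with the details obtained from the spherical argument of Proposition \ref{prop: Albouy Lagrange} by the sign changes you describe. Your added observations (the formal substitution $(x,y,a,z)\mapsto(ix,iy,ia,z)$ and the fact that no analytic extension is needed since the Klein disc covers $\mathcal{H}_S$ globally) are consistent with the remarks the paper makes elsewhere in that section.
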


\subsection{Integrable Lagrange Billiards in the Hyperbolic Plane }
We here consider the presence of a confocal conic section reflection wall $\tilde{B}$ in $V_H$ and its corresponding reflection wall $B$ in $\mathcal{H}_S$. The proof goes analogously as in the case of plane-spherical correspondence. 

\begin{prop}
	Any confocal conic section $\tilde{B} \subset V_{H}$ is projected to a confocal conic section $B \subset \mathcal{H}_S$. The foci of $B$ are the projection of the foci of $\tilde{B}$ by the central projection. The law of reflection at $\tilde{B}$ with respect to a compatible $\| \cdot \|_*$ and the law of reflection at $B \subset \mathcal{H}_S$ correspond each other.
\end{prop}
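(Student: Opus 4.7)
The plan is to mirror the strategy of Proposition \ref{prop: reflection wall correspondence}, adapted to the Minkowski setting, and split the three assertions into two independent arguments. For the first two claims---that centered confocal conic sections in $V_{H}$ project to centered confocal conic sections on $\mathcal{H}_{S}$, with foci corresponding under projection---I would invoke the projective correspondence of Kepler problems from Proposition \ref{prop: Kepler_Proj_Cor_hyp}. Any connected component of a confocal conic section in $V_{H}$ with foci $\tilde{Z}_{1}, \tilde{Z}_{2}$ is an orbit of a planar Kepler problem $(V_{H}, \|\cdot\|_{a}, m/\|\tilde{q} - \tilde{Z}_{j}\|_{a})$ for a suitable choice of focus $\tilde{Z}_{j}$ and sign of $m$; the corresponding hyperbolic Kepler problem on $\mathcal{H}_{S}$ centered at the projected point $Z_{j}$ then has orbits that are hyperbolic conic sections focused at $Z_{j}$, whence the projected curve is a conic section focused at the projection of $\tilde{Z}_{j}$. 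Running this argument once for each focus yields the correspondence of \emph{both} foci under central projection.

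For the third claim, the correspondence of the laws of reflection, I would carry out a direct computation analogous to the one in Proposition \ref{prop: reflection wall correspondence}. With $\tilde{Z}_{1} = (0, a, -1)$ and $\tilde{Z}_{2} = (0, -a, -1)$ and the compatible norm $\|\cdot\|_{a}$ from \eqref{eq: metric_a_hyp}, I would parametrize a centered confocal ellipse on $\mathcal{H}_{S}$ as the intersection of $\mathcal{H}_{S}$ with an elliptic cone through the origin. The formal substitution $(x,y,z,a) \mapsto (ix, iy, z, ia)$ in \eqref{eq: elliptic_cone} and \eqref{eq: metric_relation_sp} suggests that the relevant cone takes the form $x^{2}/\tanh^{2}\alpha + y^{2}/\tanh^{2}\beta - z^{2} = 0$ with the compatibility relation $1 - a^{2} = (\tanh^{2}\beta + 1)/(\tanh^{2}\alpha + 1)$, intersecting $\mathcal{H}_{S}$ in the curve $x^{2}/\sinh^{2}\alpha + y^{2}/\sinh^{2}\beta = 1$. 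I would then compute the tangent and normal vectors to this curve at a generic parametrized point $q$, push them forward to $T_{\tilde{q}}V_{H}$ via the central projection, and verify that the image of the normal vector is orthogonal to the image of the tangent vector with respect to $\|\cdot\|_{a}$.

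The main obstacle will be bookkeeping the sign conventions introduced by the Lorentzian signature of $\mathbb{R}^{2,1}$. Specifically, the normal vector to the conic section inside $T_{q}\mathcal{H}_{S}$ must be defined with respect to the induced Riemannian metric on $\mathcal{H}_{S}$ rather than the ambient Minkowski metric---this means projecting the ambient pseudo-gradient of the defining function onto $T_{q}\mathcal{H}_{S}$ with a proper sign, and the push-forward matrix of the central projection will differ from its spherical analogue by the corresponding signs. Once these are correctly set up, the algebraic verification of orthogonality should transfer from the spherical computation by substituting the circular trigonometric identities with their hyperbolic counterparts, and the case of confocal hyperbolae is completely analogous.
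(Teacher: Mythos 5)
Your proposal is correct and takes the same route as the paper, which in fact gives no separate proof for this proposition and merely states that the argument ``goes analogously'' to the spherical case (Proposition \ref{prop: reflection wall correspondence}); your two-part adaptation --- foci correspondence via the projective Kepler correspondence, then the cone/tangent/normal computation in Minkowski signature --- is exactly that intended analogue. One correction to the sign bookkeeping you flag: the compatibility relation forcing the foci of $\tilde{x}^2/\tanh^2\alpha + \tilde{y}^2/\tanh^2\beta = 1$ to lie at $(0,\pm a,-1)$ with respect to $\|\cdot\|_a$ from \eqref{eq: metric_a_hyp} is $1-a^2 = (1-\tanh^2\beta)/(1-\tanh^2\alpha)$ rather than $(\tanh^2\beta+1)/(\tanh^2\alpha+1)$, as one sees either by redoing the focus computation or by applying the substitution $\tan(i\alpha)=i\tanh\alpha$ to \eqref{eq: metric_relation_sp}.
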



\subsection{Proof of Theorem \ref{thm: Theorem 1} in the Hyperbolic Case and the Subcases}
With all these ingredients, the proof of Theorem \ref{thm: Theorem 1} for the spherical and planar case from Section \ref{Subsection: proof of thm 1} carries directly to the hyperbolic case as well, which completes the proof of Theorem \ref{thm: Theorem 1} in all cases.

Also, the subcases as listed in Section \ref{Subsection: Subcases of Lagrange Billiards} carries to integrable systems defined on the hyperbolic plane as well.
	
\section{The Complex Square Mapping and Hooke-Kepler Correspondence in the Hyperbolic Space and on the Sphere}
\label{sec: Hooke-Kepler conformal correspondence }
The classical conformal correspondence between the planar Hooke and Kepler problems via the complex square mapping has been generalized to conformal correspondences among the Hooke problems defined on the sphere, in the hyperbolic plane, and the Kepler problem defined in the hyperbolic plane by Nersessian and Pogosyan \cite{Nersessian- Pogosyan}. We explain that these conformal correspondences extend to integrable billiards defined with these natural mechanical systems. 

We take the plane ${\{z=0 \}}$ as a stereographic chart from the North pole $(0,0,1)$ of the unit sphere $\mathcal{S}$. For the hyperbolic plane we take the Poincar\'e disc model in the unit disc in the plane ${\{z=0 \}}$, seen as projection of the hyperboloid model from the {"North pole"} $(0,0,1)$. We identify {the plane ${\{z=0 \}}$} with $\C$ in which the Poincar\'e disc is $\mathcal{D}:=\{w \in \C \mid |w|<1\}$.

The round metric on $\mathcal{S}$ is represented in the stereographic chart  as
 \begin{equation}
 \label{eq: stereo_sph_metric}
 \frac{4}{(1 + |q|^2)^2} dq d\bar{q}.
 \end{equation}
 Analogously the Poincar\'e disk $\mathcal{D}$ is equipped with the hyperbolic metric
 \begin{equation}
 \label{eq: stereo_ps_sph_metric}
 \frac{4}{(1 - |q|^2)^2} dq d\bar{q}.
 \end{equation}

 
 The spherical kinetic energy in the stereographic chart is thus
 \[
 \frac{(1+ |q|^2)^2 |p|^2}{8}
 \]
 by using the cometric of (\ref{eq: stereo_sph_metric}). 
 In this stereographic chart, the force functions 
 of the spherical Hooke and spherical Kepler problems are given respectively as 
 \[
 -\frac{4 f | q |^2}{(1- | q|^2)^2}
 \]
 and 
 \[
 \hat{m} \frac{1- |q|^2}{2 |q|},
 \]
 respectively, with $f, \hat{m} \in \R$. 
 Analogously, the hyperbolic kinetic energy in the Poincare disk $\mathcal{D}$ is
 \[
 \frac{(1- |q|^2)^2 |p|^2}{8},
 \]
 with the force functions of the hyperbolic Hooke and hyperbolic Kepler problems
 \[
-\frac{4 f | q|^2}{(1+ | q|^2)^2}
\]
and 
\[
\hat{m} \frac{1+ |q|^2}{2 |q|},
\]
 respectively, with $f, \hat{m} \in \R$. 
 
\begin{prop} (Nersessian-Pogosyan \cite{Nersessian- Pogosyan}) \label{prop: complex square}
 	The spherical Hooke problem, the {hyperbolic} Hooke problem, and the {hyperbolic} Kepler problem are mutually in conformal correspondence. 
\end{prop}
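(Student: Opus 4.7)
The strategy is the Maupertuis--Jacobi principle: for a natural mechanical system with force function $U$ and metric $g$, the orbits on a fixed energy level $E$ (using the paper's convention $E=K-U$) are, as unparametrized curves, the geodesics of the Jacobi metric $g_{J}=2(E+U)g$. Two such systems are therefore orbitally equivalent up to time reparametrization, on matched energy levels, precisely when under a diffeomorphism of their configuration spaces their Jacobi metrics pull back to the same Riemannian metric. Since the stereographic chart of $\mathcal{S}$ and the Poincar\'e disc $\mathcal{D}$ are realized as open subsets of $\mathbb{C}$ with conformally flat metrics, this reduces Proposition \ref{prop: complex square} to the algebraic matching of conformal factors under two explicit conformal maps: the identity chart inclusion and the complex square $z=w^{2}$.

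First I would write down the three Jacobi metrics directly from the force functions and metrics recalled in this section. Over the common denominator $(1+|q|^{2})^{2}(1-|q|^{2})^{2}$, the spherical and hyperbolic Hooke Jacobi metrics differ only by the sign inside one of the squared factors, so the spherical Hooke $\leftrightarrow$ hyperbolic Hooke correspondence is obtained by identifying the two conformal coordinates on $\mathcal{D}$ and matching the coefficients of $1,|q|^{2},|q|^{4}$; this forces $E_{\mathrm{sH}}=E_{\mathrm{hH}}$ together with a linear relation between the two Hooke parameters $f$. The correspondence then takes place on the common domain, that is, on the Poincar\'e disc.

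For the hyperbolic Hooke $\leftrightarrow$ hyperbolic Kepler correspondence I would apply the Bohlin--Kustaanheimo--Stiefel map $z=w^{2}$. Substituting $|z|=|w|^{2}$ and $|dz|^{2}=4|w|^{2}|dw|^{2}$ cancels the $|z|$ in the denominator of the Kepler Jacobi metric and produces a pullback of the form
\begin{equation*}
g_{J}^{\mathrm{hK}}\bigl|_{z=w^{2}}=\frac{16\bigl[\hat m+2E_{K}|w|^{2}+\hat m|w|^{4}\bigr]}{(1-|w|^{2})^{2}(1+|w|^{2})^{2}}|dw|^{2}.
\end{equation*}
Setting this equal to $g_{J}^{\mathrm{hH}}$ with Hooke data $(f,E_{H})$ and matching the coefficients of $1,|w|^{2},|w|^{4}$ forces $E_{H}=2\hat m$ and a linear relation between $E_{K}$, $\hat m$ and $f$. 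Composing with the identification from the previous step yields the spherical Hooke $\leftrightarrow$ hyperbolic Kepler correspondence, also realized via $z=w^{2}$.

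The main subtlety is that $z=w^{2}$ is a branched double cover with branch point at $w=0$, which is simultaneously the regular Hooke center on the $w$-side and the singular Kepler center on the $z$-side. Hence the correspondence is a genuine conformal diffeomorphism only away from the centers, and each Kepler orbit lifts to a pair of Hooke orbits. Consequently the correspondence is not free in its parameters: once the Hooke data $(f,E_{H})$ are fixed, the matched Kepler data $(\hat m,E_{K})$ are determined, exactly as in the classical planar Bohlin correspondence. Modulo these algebraic relations, the proposition then follows from the Maupertuis--Jacobi principle applied in the conformal coordinates supplied in this section.
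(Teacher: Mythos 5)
Your proposal is correct and is essentially the paper's own argument in a different but equivalent packaging: the paper restricts each Hamiltonian to a fixed energy hypersurface, multiplies by a conformal factor (a time reparametrization) and applies the canonical lift $(z,w)\mapsto(z^{2},w/2\bar z)$ of the complex square map, which is exactly the Maupertuis--Jacobi matching of conformal factors that you carry out on the configuration side, with the same two-step decomposition (Hooke$\leftrightarrow$Hooke by the identity chart, Hooke$\leftrightarrow$Kepler by $z=w^{2}$). Your parameter relations (equal energies for the two Hooke problems; Kepler mass and energy determined linearly by the Hooke energy and $f$) agree, up to overall normalization of the lifted momentum and time rescaling, with the paper's conclusion that the image is a hyperbolic Kepler problem on the energy level $-(4f\pm 2\hat{m})$, and your remark about the branch point at the centers is a correct additional observation not emphasized in the paper.
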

\begin{proof}
	We start with the Hamiltonian of the spherical/hyperbolic Hooke problem
	\[
	\frac{(1\pm |z|^2)^2 |w|^2}{8} +  \frac{4 f | z|^2}{(1\mp | z|^2)^2} -\hat{m} = 0
	\]
	restricted to its $\hat{m}$-energy hypersurface. The signs determine whether it is the spherical or the hyperbolic problem we are considering. 
	By multiplying both sides by $\dfrac{(1\mp | z|^2)^2}{ | z|^2}$, we get 
	\[
	\frac{(1- |z|^4)^2 |w|^2}{8 | z|^2} + 4 f - \hat{m} \dfrac{(1\mp | z|^2)^2}{ | z|^2}= 0
	\]
	We now apply the conformal transformation $(z, w) \mapsto (z^2, w/2 \bar{z}):= (p,q)$ and the transformed Hamiltonian becomes
	\[
	\frac{(1- |q|^2)^2 |p|^2}{8 } +  4 f - \hat{m} \dfrac{(1\mp | q|)^2}{ | q|}= 0.
	\]
	after a proper time change. As we can rewrite this system into 
	\[
	\frac{(1- |q|^2)^2 |p|^2}{8 } +  4 f - \hat{m} \dfrac{1+ | q|^2}{ | q|} \pm 2 \hat{m} = 0,
	\]
	this is the Hamiltonian of a hyperbolic Kepler problem restricted to the energy level with energy $-(4 f \pm 2 \hat{m} )$. 
	
	{The same trick, with a multiplicative factor of $\dfrac{(1\mp | q|^2)^2}{(1\pm | q|^2)^2} $ gives a transformation between  the spherical and hyperbolic Hooke problems restricted to energy levels.}

\end{proof}

\begin{cor} In the Poincar\'e disc in {the plane ${\{z=0 \}}\cong \C$}, the curve representing a branch of a conic section on the hyperboloid model focused at the ``South pole'' $(0,0,-1)$ is transformed via the complex square mapping $:\C \to \C: z \mapsto z^2$ into a curve simultaneously representing a conic section centered at the ``South pole'' on the hyperboloid model, and part of a conic section
{defined on the hemisphere $\mathcal{S}_{SH}$ centered at the South pole.}
\end{cor}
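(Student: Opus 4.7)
The plan is to deduce this corollary by combining two ingredients already in the paper: the identification of conic sections on the sphere/hyperboloid as orbits of the relevant Kepler and Hooke problems, and the conformal correspondence of Proposition \ref{prop: complex square}. The key observation is that in each of the three chart representations (stereographic from the sphere, Poincar\'e disc for the hyperboloid), the ``South pole'' $(0,0,-1)$ corresponds to the point $z=0$, which is fixed under the complex square mapping $z \mapsto z^2$. So ``focused at the South pole'' and ``centered at the South pole'' refer to the same distinguished point in the chart, and the mapping interchanges their roles dynamically.

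First I would make the conic-orbit identifications explicit. By the hyperbolic analogue of the classical Kepler fact, established through the projective correspondence of Proposition \ref{prop: Kepler_Proj_Cor_hyp} and the discussion opening the proof of Proposition \ref{prop: reflection wall correspondence}, every branch of a conic section on the hyperboloid focused at $(0,0,-1)$ arises as an orbit of the hyperbolic Kepler problem with center the South pole (for a suitable sign of the mass parameter, corresponding to whether the branch is the near or far one). Similarly, orbits of the hyperbolic (respectively spherical) Hooke problem centered at the South pole are precisely the centered conic sections on the hyperboloid (respectively on $\mathcal{S}_{SH}$), read in the Poincar\'e disc (respectively stereographic) chart.

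Second, I would invoke Proposition \ref{prop: complex square}: the complex square mapping $z \mapsto z^2$ provides a conformal correspondence, restricted to appropriate energy hypersurfaces, between the hyperbolic Kepler problem and both the hyperbolic Hooke and the spherical Hooke problems, with the South pole as the distinguished center in each. Because conformal correspondence is a correspondence of orbits up to time reparametrization, the image under $z \mapsto z^2$ of a branch of a conic section focused at the South pole on the hyperboloid is simultaneously an orbit of the hyperbolic Hooke problem centered at the South pole and an orbit of the spherical Hooke problem centered at the South pole. By the identifications above, this image is therefore simultaneously a centered conic section on the hyperboloid and (part of) a centered conic section on $\mathcal{S}_{SH}$.

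The only subtlety, and the main thing to take care with, is the word ``part of'' in the statement for the spherical case: the map $z \mapsto z^2$ is not surjective onto the whole disc (its image covers the disc with a chosen branch), and the Poincar\'e disc sits inside the bigger stereographic plane representing the sphere, so only a piece of the spherical centered conic is swept out. Once the identification of each conic family with Kepler/Hooke orbits is in hand, the corollary is an immediate transport of orbits through Proposition \ref{prop: complex square}, with no further computation required.
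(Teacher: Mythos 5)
Your proposal is correct and follows essentially the same route as the paper: the paper's own proof is a one-line appeal to Proposition \ref{prop: complex square}, noting that orbits of the hyperbolic Kepler problem are sent to orbits of the spherical/hyperbolic Hooke problems up to time reparametrization, with the identification of focused/centered conics as Kepler/Hooke orbits left implicit. You merely make that identification explicit and add a sensible remark about why only ``part of'' the spherical conic is obtained.
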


\begin{proof} This follows from Proposition \ref{prop: complex square}, which implies that an orbit of the hyperbolic Kepler problem is sent to an orbit of the spherical/hyperbolic Hooke problem up to a time parametrization. Thus the conclusion of the corollary follows.
\end{proof}

{We now show that any confocal family of centered spherical/hyperbolic conic sections is transformed into a confocal family of focused hyperbolic conic sections by {this} series of conformal transformations.

\begin{prop}
\label{prop: stereographic_confocal}
A family of confocal focused hyperbolic conic sections {on $\mathcal{H}_{S}$}, expressed in the Poincar\'e disc $\mathcal{D}$ are transformed {into} a family of confocal centered spherical/hyperbolic conic sections in the stereographic chart/Poincar\'e disc in the plane ${\{z=0 \}} \cong \C$ via the complex square mapping $\C \to \C: z \mapsto z^2$.
\end{prop}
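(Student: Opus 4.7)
My plan is to combine the single-orbit statement of the Corollary immediately preceding Proposition~\ref{prop: stereographic_confocal} with an elementary identification of how the ``second focus'' parameter transforms under $z\mapsto z^2$. I first normalize the focused confocal family so that one shared focus lies at the center $0\in\mathcal{D}$ of the Poincar\'e disc (the south pole of the hyperboloid model) and, after a rotation preserving the model, the other shared focus lies at a point $\tilde{Z}$ on the positive real axis. Each member of the family is then simultaneously an orbit of the hyperbolic Kepler problem with Kepler center $0$, so by the Corollary each member is carried by $z\mapsto z^2$ to a centered conic section in the target chart, and the remaining task is to show the resulting centered conics share a common pair of foci.

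The identification of the image foci reduces to a standard planar complex-squaring computation. Parametrizing a centered conic in the $z$-plane as $z(t)=A\cos t+iB\sin t$, a direct expansion gives
\begin{equation*}
z(t)^2=\tfrac{A^2-B^2}{2}+\tfrac{A^2+B^2}{2}\cos 2t+iAB\sin 2t,
\end{equation*}
which is an ellipse of center $\tfrac{A^2-B^2}{2}$ with semi-axes $\tfrac{A^2+B^2}{2}$ and $AB$, hence of linear eccentricity $\tfrac{A^2-B^2}{2}$ and with Euclidean foci exactly at $0$ and $A^2-B^2$. Reading this backwards, the preimage under $z\mapsto z^2$ of a family of focused conics with second focus fixed at $\tilde{Z}\in\R$ is the one-parameter family of centered conics with $A^2-B^2=\tilde{Z}$, i.e., the centered confocal family with foci at $\pm\sqrt{\tilde{Z}}$.

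The main obstacle is that ``focus'' in the proposition refers to the \emph{ambient} hyperbolic/spherical notion, not to the Euclidean notion of the image curve drawn in the chart. To close this gap I would invoke the orbital characterizations underlying Propositions~\ref{prop: Kepler_Proj_Cor}, \ref{prop: Hooke_Proj_Cor}, \ref{prop: Kepler_Proj_Cor_hyp}, and \ref{prop: Hooke_Proj_Cor_hyp}: an ambient-focused conic on $\mathcal{H}_S$ with one focus at $Z$ is exactly a hyperbolic Kepler orbit with ambient Kepler center $Z$, and an ambient-centered spherical/hyperbolic conic centered at $Z$ is exactly a Hooke orbit with ambient Hooke center $Z$. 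Hence the Hamiltonian correspondence of Proposition~\ref{prop: complex square} pairs ambient-focused Kepler orbits to ambient-centered Hooke orbits in a way in which the distinguished south pole (the shared center $0$) is intrinsic, while the only remaining datum needed to fix a confocal family, namely the position of the second ambient focus $\tilde{Z}$, is in bijection with the centered confocal parameter $\pm\sqrt{\tilde{Z}}$ by the elementary computation above. Preservation of the confocal structure then follows, and this identification between the ambient foci and the algebraic foci of the chart curve is the step that I expect to require the most care.
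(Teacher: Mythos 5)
Your overall strategy (reduce to the single-orbit Corollary plus an identification of how the second focus transforms under the square map) is reasonable in outline, but the key computation you propose does not apply to the curves in question, and this is a genuine gap rather than a detail to be checked. The parametrization $z(t)=A\cos t+iB\sin t$ describes a \emph{Euclidean} centered ellipse, and the statement that $z(t)^2$ has foci at $0$ and $A^2-B^2$ is the classical planar fact. The curves appearing in Proposition \ref{prop: stereographic_confocal}, however, live in the \emph{conformal} charts: a hyperbolic conic focused at the centre of the Poincar\'e disc, and a spherical or hyperbolic conic centered at the pole, are represented in the stereographic chart or Poincar\'e disc by quartic curves, not by Euclidean conics (this is visible in the paper's explicit equation for the focused family in the coordinates $(q_1,q_2)$, whose denominators involve $(2aq_2-q_1^2-q_2^2-1)^2$). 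So your parametrization is not available; and even where a chart curve happens to be an affine conic, its Euclidean foci need not coincide with its ambient (metric) foci. You correctly flag this as ``the step requiring the most care,'' but the mechanism you offer to close it --- invoking the orbital characterizations behind Propositions \ref{prop: Kepler_Proj_Cor}--\ref{prop: Hooke_Proj_Cor_hyp} --- only yields that each individual focused conic is carried to \emph{some} centered conic, which is exactly the content of the Corollary already proved; it gives no control on where the ambient foci of the images land, which is the entire content of the confocality claim.

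For comparison, the paper closes this gap by explicit computation: it transports the family to the gnomonic (central-projection) charts, where spherical and hyperbolic conics \emph{are} affine conics whose foci can be computed relative to the appropriate affine norm $\|\cdot\|_a$; after the square map and the change of charts, the resulting equation factors (with computer assistance) into a positive factor, an empty conic $G_1=0$, and a genuine conic $G_2=0$ whose foci sit at $\pm 2\sqrt{a}/(1-a)$, independently of the family parameter $B$ --- and that independence is the proof of confocality. To salvage your more conceptual route you would need either (i) a direct argument that the ambient foci of the image curves depend only on the ambient second focus of the source family and not on the individual member, or (ii) an intrinsic characterization of confocality that is manifestly preserved by the conformal correspondence; neither is supplied, and (ii) is not obvious, since the Hooke--Kepler conformal correspondence is anchored at one distinguished focus (the branch point of $z\mapsto z^2$) only.
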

\begin{proof}
	{We start with a family of confocal focused hyperbolic conic section on $\mathcal{H}_{S}$. Choose a  parameter $0 <a <1$, and suppose that a family of such hyperbolic conic sections has common centers at $(0, \frac{a}{\sqrt{1-a^2}}, -\frac{1}{\sqrt{1-a^2} })$.}

	We {take a new set of }orthogonal coordinates in the Minkowski space $\mathbb{R}^{2,1}$ as 
	\[
	u =x, \quad v = \frac{y+ az}{\sqrt{1-a^2}}, \quad w = \frac{ay + z}{\sqrt{1-a^2}}, 
	\]
	The pseudo-Riemannian metric defined by \eqref{eq: hyperbolid_metric} is {expressed in these new coordinates as}
	\[
	du^2 + dv^2 -dw^2.
	\]
	In this coordinates, the two-sheeted hyperboloid is given by the equation 
	\[
	\mathcal{H}= \{ (u,v,w)  \in \mathbb{R}^{2,1} \mid u^2 +v^2 -w^2= -1\}. 
	\]
	The plane $\hat{V}:=\{w=-1\}$ is tangent to the hyperboloid $\mathcal{H}$ at the the point $(u,v,w)= (0,0,-1)$. We equip $\hat{V}$ with the norm $\| \cdot \|_a$ defined as 
	\[
	\| (\tilde{u},\tilde{v}) \|^2_a = \tilde{u}^2 + \frac{\tilde{v}^2}{1-a^2}
	\]
	for $(\tilde{u}, \tilde{v}) \in \hat{V}$.
	We now consider {the family of} confocal centered ellipses in $\hat{V}$ with {foci} at $(0,-a,-1)$ and $(0,a,-1)$ ({with respect to $\| \cdot \|_a$}) given by the equation
	\begin{equation}
	\label{eq: confocal_conics_T}
	\frac{\tilde{u}^2}{\frac{B^2-a^2}{1-a^2}} + \frac{\tilde{v}^2}{B^2} - 1=0,
	\end{equation}
	where {$B>a$} is a positive parameter. 
	
	We now project {this family of} confocal centered ellipses in $\hat{V}$ to the hyperboloid by the central projection. Let $(u, v, w) \in \mathcal{H}_{S}$ be the centrally projected point of $(\tilde{u},\tilde{v},-1)$. Then we have 
	\[
	\tilde{u}= -\frac{u}{w}, \quad \tilde{v} = -\frac{v}{w},
	\]
	{and the transformed expression of the family of confocal ellipses is given by }
	\begin{equation}
	\label{eq: confocal_conics_H}
	\frac{u^2(1-a^2)}{w^2(B^2-a^2)} + \frac{v^2}{w^2B^2}-1=0.
	\end{equation}
	{As an implication of the }projective correspondence of the hyperbolic Kepler problem and the planer Kepler problem, the central projection {projects the} hyperbolic conic sections {to} conic sections in the plane and {projects foci to foci} when they are centered. Thus, the projected conic sections on {$\mathcal{H}_{S}$} is again confocal. 
	
	In the original coordinates $(x,y,z)$ in $\mathbb{R}^{2,1}$, the equation \eqref{eq: confocal_conics_H} can be written {as}
	\[
	\frac{x^2(1-a^2)^2}{(ay+z)^2(B^2-a^2)} + \frac{(y+az)^2}{(ay + z)^2 B^2}-1= 0.
	\]
	We now rewrite this in the {coordinates} $(q_1,q_2)$ in the Poincar\'e disc {$\mathcal{D}$} {with the stereographic projection} 
	\[
	x = \frac{2 q_1}{1- q_1^2 - q_2^2}, \quad y= \frac{2 q_2}{1- q_1^2 - q_2^2}, \quad z= -\frac{1+ q_1^2 + q_2^2}{1- q_1^2 - q_2^2}, 
	\]
	{which transforms the equation of the} confocal focused hyperbolic conic sections in the {Poincar\'e disc $\mathcal{D}$} {into} 
	\[
	\frac{4(1-a^2)q_1^2}{(B^2-a^2)(2a q_2- q_1^2 - q_2^2 - 1)^2} + \frac{(-2 q_2 + a(q_1^2 + q_2^2 + 1))^2}{B^2 (2 a q_2 - q_1^2 - q_2^2 - 1)^2} - 1 = 0.
	\]
	We now apply the complex square mapping. Set 
	\[
	{q_1+i q_{2} = (z_{1}+i z_{2})^{2}.}
	\]
	{and the above equation is now }
	\[
	\frac{4( 1- a^2)^2 (z_1^2 - z_2^2)^2}{(B^2 - a^2)(-z_1^4 - 2 z_1^2 z_2^2 - z_2^4 + 4 a z_1 z_2 - 1)^2} + \frac{-4 z_1 z_2 + a (z_1^4 + 2 z_1^2 z_2^2 + z_2^4 + 1))^2}{B^2(-z_1^4 - 2 z_1^2 z_2^2 - z_2^4 + 4 a z_1 z_2 - 1)^2} -1 =0.
	\]
	{Suppose that $(z_1,z_2) {\in \mathcal{D}}$ corresponds to the point $(x,y,z) \in \mathcal{S}_{SH}$ via stereographic projection: }
	\[
	z_1 = - \frac{x}{z}, z_2 = -\frac{y}{z}.
	\]
	{Then the above equation can be equivalently written as}
	\begin{equation}
	\label{eq: confocal_central_conics_S}
	\begin{split}
	& \frac{4(1-a^2)^2 (-1 + z)^4 (x^2 - y^2)^2}{(B + a) (B - a) (z^4 - 4 z^3 + (-4 xya + 6)z^2 + (8 xya - 4)z + x^4 + 2x^2 y^2 + y^4 - 4 a xy + 1)^2} \\
	& + \frac{(z^4 a - 4 z^3 a + (-4 xy + 6a) z^2 + (8 xy - 4a)z + (x^4 + 2x^2 y^2 + y^4 + 1)a - 4 xy)^2}{B^2  (z^4 - 4 z^3 + (-4 xya + 6)z^2 + (8 xya - 4)z + x^4 + 2x^2 y^2 + y^4 - 4 a xy + 1)^2} -1=0.
	\end{split}
	\end{equation}
	In order to see that this equation determines spherical conic sections with common centers at the ``South pole'' and common foci, we project them to the plane $V=\{ z=-1 \}$ by the central projection {and examine their images therein. }
	{In the gnomonic chart $V$, the above equation is expressed with coordinates $(\tilde{x}, \tilde{y}, -1) \in V$ as}
	\[
	x= \frac{\tilde{x}}{\sqrt{\tilde{x}^2 + \tilde{y}^2 + 1}}, \quad  y= \frac{\tilde{y}}{\sqrt{\tilde{x}^2 + \tilde{y}^2 + 1}}, \quad z=- \frac{1}{\sqrt{\tilde{x}^2 + \tilde{y}^2 + 1}}.
	\]
	By using Maple, this can be factorized into
	\begin{equation*}
	\begin{split}
	&4 ((4 \tilde{x}^2 + 4 \tilde{y}^2 + 8) \sqrt{\tilde{x}^2 + \tilde{y}^2 + 1} + \tilde{x}^4 + (2 \tilde{y}^2 + 8) \tilde{x}^2 + \tilde{y}^4 + 8 \tilde{y}^2 + 8) (1+\tilde{x}^2 + \tilde{y}^2 )\\
	&\times \left(-\frac{(-a^2 + B)(B + 1)\tilde{x}^2}{2} + a\tilde{y}(B - 1)(B + 1)\tilde{x} - \frac{(-a^2 + B)(B + 1)\tilde{y}^2}{2} - B^2 + a^2\right)\\
	&\times\left(-\frac{(a^2 + B)(B - 1)\tilde{x}^2}{2} + a\tilde{y}(B - 1)(B + 1)\tilde{x} - \frac{(a^2 + B)(B + 1)\tilde{y}^2}{2} - B^2 + a^2\right)=0
	\end{split}
	\end{equation*}

	The factors in the first line only takes positive value. Thus, we only consider the last two factors:
	\[
	G_1:=-\frac{(-a^2 + B)(B + 1)\tilde{x}^2}{2} + a\tilde{y}(B - 1)(B + 1)\tilde{x} - \frac{(-a^2 + B)(B + 1)\tilde{y}^2}{2} - B^2 + a^2
	\]
	and 
	\[
	G_2:=-\frac{(a^2 + B)(B - 1)\tilde{x}^2}{2} + a\tilde{y}(B - 1)(B + 1)\tilde{x} - \frac{(a^2 + B)(B + 1)\tilde{y}^2}{2} - B^2 + a^2.
	\]
	In the rotated coordinates $\tilde{X} = \dfrac{\tilde{x}+\tilde{y}}{\sqrt{2}}, \tilde{Y} = \dfrac{\tilde{x}-\tilde{y}}{\sqrt{2}}$, they can be rewritten into
	\[
	G_1= \frac{\tilde{X}^2}{\frac{2(B^2-a^2)}{(a - 1)(B + 1)(B + a)}} +  \frac{\tilde{Y}^2}{\frac{-2(B^2-a^2)}{(a + 1)(B + 1)(B - a)}} -1
	\]
	and 
	\[
	G_2=\frac{\tilde{X}^2}{\frac{2(B^2-a^2)}{(a - 1)(B - 1)(B - a)}} +  \frac{\tilde{Y}^2}{\frac{-2(B^2-a^2)}{(a + 1)(B - 1)(B + a)}} -1.
	\]
	{Notice that $G_1=0$ contains no real points, since the coefficients of $\tilde{X}^{2}, \tilde{Y}^{2}$ are both negative.} Hence, only $G_2=0$ determines centered conic sections in $V$. We now compute the positions of their foci {by taking the affine change of the norm on $V$ into account}. Suppose that the foci of $G_2=0$ are located at $(\tilde{X},\tilde{Y})= (\pm c,0)$, then the norm $\| \cdot \|_c$ in $V$ which depends on the positions of foci is necessarily defined as
	\[
	\|(\tilde{X},\tilde{Y})\|^2_c = \frac{\tilde{X}^2}{1+c^2} + \tilde{Y}^2.
	\]
	This means we have the following equation in terms of $c$:
	\[
	\frac{c^2}{1+c^2} = \frac{\frac{2(B^2-a^2)}{(a - 1)(B - 1)(B - a)}}{1+c^2} - \frac{-2(B^2-a^2)}{(a + 1)(B - 1)(B + a)}.
	\]
	By solving this with respect to $c$, we obtain
	\[
	c = \pm \frac{2 \sqrt{a}}{1-a}
	\]
	which depends only on $a$. Therefore, the equation $G_2=0$ determines a family of confocal central conic sections in $V$. 
	From this fact and the projective correspondence of the spherical Kepler problem and the planer Kepler problem, we conclude that the equation \eqref{eq: confocal_central_conics_S} determines confocal centered spherical conic sections on $\mathcal{S}_{SH}$. 
	
	{Should we start from a family of confocal centered hyperbolae in $\hat{V}$ instead of ellipses, then we get the same type of results in a similar way. we thus conclude} that a family of confocal focused hyperbolic conic sections are transformed into a family of confocal centered spherical conic sections. 
	
	Analogously, one can show that a family of confocal focused hyperbolic conic sections are transformed into a family of confocal centered hyperbolic conic sections. 
\end{proof}


Combining these results, we obtain the following proposition.

\begin{prop} 
	The hyperbolic Kepler billiards with {a combination of branches of confocal conic sections} focused at the ``South pole'' $(0,0,-1)$ on the hyperboloid model as reflection wall are conformally transformed into the {hemispherical}/hyperbolic Hooke billiards with {the corresponding combination of confocal conic sections reflection wall} centered at the ``South pole'' on the {hemisphere}/hyperboloid. 
	Therefore their integrabilities are equivalent by \cite{Takeuchi-Zhao}.
\end{prop}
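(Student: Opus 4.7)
The plan is to stack the two main ingredients already at hand, namely Proposition \ref{prop: complex square} (the conformal correspondence of the underlying natural mechanical systems via $z \mapsto z^{2}$) and Proposition \ref{prop: stereographic_confocal} (the correspondence of confocal conic section families under the same mapping), and then to supplement them with the observation that elastic reflections are preserved by conformal maps, so that the full billiard systems, not only their underlying mechanical systems and walls separately, are in correspondence. Concretely, I would work in the stereographic charts on $\mathcal{S}$ and on the hyperboloid model from the ``North pole'' $(0,0,1)$, so that all three spaces appear as open subsets of $\C$ equipped with metrics of the form $\lambda(|q|^{2})\, dq\, d\bar{q}$, and Proposition \ref{prop: complex square} applies directly.

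Next I would read the wall correspondence out of Proposition \ref{prop: stereographic_confocal}: any family of confocal hyperbolic conic sections focused at the ``South pole'' in the Poincar\'e disc is sent by $z \mapsto z^{2}$ to a family of confocal centered conic sections in the stereographic chart of $\mathcal{S}$ or the Poincar\'e disc of $\mathcal{H}$. In particular, every branch of a focused confocal Kepler wall is mapped to a well-defined component of a centered confocal Hooke wall, and any combination of such branches used as a Kepler billiard wall is carried to a corresponding combination of centered confocal walls in the Hooke setting. The remaining check is the reflection law. Since $z \mapsto z^{2}$ is holomorphic, it is conformal with respect to any conformally flat metric, and on both sides we have metrics of the form $\lambda(|q|^{2})\, dq\, d\bar{q}$; hence the map preserves the angle between any pair of tangent vectors at a regular point. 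In particular, the angle between the incoming/outgoing velocity and the tangent to the reflection wall is preserved, so that the elastic law with respect to the hyperbolic metric on the Kepler side is carried precisely onto the elastic law with respect to the spherical or hyperbolic metric on the Hooke side.

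Combining these three observations, the billiard trajectories of the hyperbolic Kepler system, modulo time reparametrization, are identified with the billiard trajectories of the corresponding Hooke system, so any additional first integral of one billiard descends from a first integral of the other, and the two integrability statements are equivalent. The integrability of the Hooke side with centered confocal conic section walls is provided by \cite{Takeuchi-Zhao}, finishing the argument. The main technical obstacle I anticipate is bookkeeping the $2$-to-$1$ nature of $z\mapsto z^{2}$: one focused Kepler wall upstairs has two geometric preimages, and one must align the ``combination of branches'' data on the Kepler side with the ``combination of centered conic sections'' data on the Hooke side consistently, paying attention to the behaviour near the branch point $0$, which is where the conformal equivalence degenerates but which does not lie on any of the relevant reflection walls for generic parameters.
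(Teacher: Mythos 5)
Your proposal is correct and follows essentially the same route as the paper, which derives this proposition by combining Proposition \ref{prop: complex square} (conformal correspondence of the underlying systems), Proposition \ref{prop: stereographic_confocal} (correspondence of the confocal families under $z\mapsto z^{2}$), and the preservation of the elastic reflection law under conformal maps as in \cite{Takeuchi-Zhao}. Your extra remark about bookkeeping the $2$-to-$1$ nature of the square map and the branch point at the origin is a sensible refinement but does not change the argument.
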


{\bf Acknowledgement}	
A.T. is supported by Masason Foundation. L.Z. is supported by DFG ZH 605/1-1. 

\vspace{1.5cm}

\hspace{-1cm}
\begin{tabular}{@{}l@{}}%
	Airi Takeuchi\\
	\textsc{Karlsruhe Institute of Technology, Karlsruhe, Germany.}\\
	\textit{E-mail address}: \texttt{airi.takeuchi@partner.kit.edu}
\end{tabular}
\vspace{10pt}

\hspace{-1cm}
\begin{tabular}{@{}l@{}}%
	Lei Zhao\\
	\textsc{University of Augsburg, Augsburg, Germany.}\\
	\textit{E-mail address}: \texttt{lei.zhao@math.uni-augsburg.de}
\end{tabular}
\end{document}